\newcommand{\matV}{F}
\newcommand{\matW}{G}
\newcommand{\fromkarl}[1]{\textcolor{red}{Karl Comment: ``#1''}}
\crefname{hypothesis}{Hypothesis}{Hypotheses}
\title{The shift-and-invert Arnoldi method for singular matrix pencils}
\author{Karl Meerbergen\thanks{Department of Computer Science, KU Leuven, Belgium,
    (\email{Karl.Meerbergen@cs.kuleuven.be}.)}
\and Zhijun Wang\thanks{School of Basic Education, Dalian Polytechnic University, P.R.China,
    (\email{wang.zjun.407@gmail.com}.)}
}
\begin{document}

\maketitle

\begin{abstract}
A popular method for solving large sparse regular eigenvalue problem is the 
shift-and-invert Arnoldi method.
This paper aims to use the method for large sparse singular pencils.
In three recent papers, {\em Hochstenbach, Mehl, and Plestenjak, 2019, 2023, and 2024}, propose regularization of the singular pencil, using randomly chosen regularization matrices.
We propose sparse regularization matrices obtained from the pivoting sequence of a sparse LU factorization.
As a side effect, the LU factorization often is rank revealing, which facilitates finding a regularization.
Numerical examples illustrate that the LU factorization mostly detects the normal rank and finds a suitable sparse regularization.
A rank correction method is proposed for the cases where the normal rank is not determined correctly.
For full rank rectangular eigenvalue problems, the pivoting sequence of existing sparse direct system solvers can be used.
We compare with randomized regularization methods:  preservation of sparsity is beneficial for  performance, and often, the accuracy of the eigenvalue solver.
\end{abstract}

\begin{keywords}
singular matrix pencils, shift-and-invert Arnoldi, sparse matrices
\end{keywords}

\begin{AMS}
65F15
\end{AMS}
\section{Introduction}

We consider the generalized eigenvalue problem

\begin{equation}\label{eq:GEP}
A{\bf x}=\lambda B{\bf x},
\end{equation}

\noindent where $A,B\in\mathbb{C}^{n\times n}$. If matrix pencil $A-\lambda B$ is singular, i.e., $\mathrm{det}(A-\lambda B)\equiv 0$, \eqref{eq:GEP} is called a \emph{singular eigenvalue problem}. In this case, the classical definition of eigenvalues becomes inappropriate since any scalar $\lambda_0\in\mathbb{C}$ satisfies $\mathrm{det}(A-\lambda_0 B)=0$. Therefore, the meaningful (finite) \emph{regular eigenvalues} \cite{gantmacher1959}, are defined as $\lambda_0\in\mathbb{C}$ such that $\mathrm{rank}(A-\lambda_0 B)<\mathrm{nrank}(A-\lambda B)$, where $\mathrm{nrank}(A-\lambda B):=\max_{\alpha,\beta\in\mathbb{C}}\mathrm{rank}(\alpha A-\beta B)$ is called the \emph{normal rank} of the matrix pencil $A-\lambda B$.
According to the definition, we say that there is a (regular) eigenvalue at infinity if $\mathrm{rank}(B)<\mathrm{nrank}(A-\lambda B)$.

Singular pencils often arise from multiparameter eigenvalue problems or multivariate polynomial or rational eigenvalue problems.
Rectangular eigenvalue problems can also be reformulated as square singular eigenvalue problems.
Examples of singular pencils are:
updating a finite element model to measured data (\S\ref{sec:update} and \cite{COTTIN2001}),  finding multiple eigenvalues of a parametric eigenvalue problem (\S\ref{sec:double} and \cite{Elias_DoubleEig}), and rational nonlinear eigenvalue problems (\S\ref{sec:nonlinear} and \cite{Claes2022}).
Note that, for these applications the normal rank is known from the structure of the problem, which facilitates our task.

For `small' scale and dense singular problems, the staircase method \cite{VANDOOREN1979103,byers2007,muhivc2014} may be used.
As for the regular case, the QZ method is generally reliable as well. However, these methods can be quite time and memory consuming for large-scale (sparse) problems.
Recently, a rank-completing method for singular problems was proposed in \cite{Hochstenbach2019}, \cite{Hochstenbach2023a}. 
A great advantage of this method is that it provides an efficient way to separate the true eigenvalues from the spurious ones. 
The main idea is that, singular matrix pencil $A-\lambda B$ can be transformed to 1) a regular perturbed pencil
\begin{equation}\label{eq:perturbed}
A - \lambda B + \matW (T_A - \lambda T_B) \matV^*
\end{equation}
where $\matW,\matV\in\mathbb{C}^{(n-k)\times n}$ are `random' orthogonal matrices, $T_A,T_B\in\mathbb{C}^{(n-k)\times(n-k)}$ are `random' diagonal matrices, $k=\mathrm{nrank}(A-\lambda B)$, or 2) a regular augmented pencil
\begin{equation}\label{eq:augmented}
\begin{bmatrix} A - \lambda B & \matW (T_A - \lambda T_B) \\ (S_A-\lambda S_B) \matV^* & 0\end{bmatrix}
\end{equation}
where $S_A,S_B\in\mathbb{C}^{(n-k)\times(n-k)}$ are random diagonal matrices, or 3) a regular projected pencil
\begin{equation}\label{eq:projected}
    \matW_\perp^*A\matV_\perp-\lambda \matW_\perp^*B\matV_\perp
\end{equation}
where $\matW_\perp,\matV\perp\in\mathbb{C}^{n\times k}$ are random orthogonal matrices. 
It is proved that, for generic $\matW,\matV,T_A,T_B,S_A,S_B,\matW_\perp,\matV_\perp$, the perturbed pencil \eqref{eq:perturbed}, the augmented pencil \eqref{eq:augmented}, and the projected pencil \eqref{eq:projected} are regular and include all regular eigenvalues of the original pencil $A-\lambda B$ \cite{Hochstenbach2019}.
We do not consider the perturbed pencil any further, because we did not experience an advantage.
Important to mention is that the pencils \eqref{eq:perturbed} and \eqref{eq:augmented} include prescribed eigenvalues determined by $T_A - \lambda T_B$ (and $S_A-\lambda S_B$), that can be freely chosen. 

In this paper, we explore numerical methods for solving large-scale singular eigenvalue problems, for which the isolation of the singular part is often not feasible, or may destroy the sparse structure of matrices $A$ and $B$. 
There exist plenty of numerical methods for large-scale regular eigenvalue problem \eqref{eq:GEP}. Here, we focus on the shift-and-invert Arnoldi method and its two-sided variant \cite{ruhe83}.
This method builds a Krylov space of the shift-and-invert matrix $(A-\sigma B)^{-1}B$. 
For the singular generalized eigenvalue problem, there is no $\sigma\in\mathbb{C}$ so that $A-\sigma B$ is invertible. The approach consists of applying the shift-and-invert Arnoldi method to a regularized problem such as \eqref{eq:augmented}, or \eqref{eq:projected}, with prescribed (spurious) eigenvalues at infinity, since this is usually an unwanted eigenvalue.

The use of dense $\matV$ and $\matW$ (and alternatively, $\matV_\perp$ and $\matW_\perp$) should be avoided for large scale problems.
We present a strategy using an LU factorization, that computes these matrices, and tries to determine the normal rank.
Rank revealing LU factorization exists using global maximum volume pivoting \cite{MIRANIAN20031} \cite{pan2000}.
A sparse QR factorization may be used instead \cite{pierce97}.
The development of a sparse factorization is outside the scope of the paper.
Therefore, we decided to use partial, rook and complete pivoting, which all three appear to work quite nicely for the problems solved.
Note that for regular eigenvalue problems, it is usually assumed that the shifted matrix has a good condition number to reduce rounding errors corrupting the Arnoldi recurrence relation \cite{meer00c}.
Therefore, we expect that there is clear gap between the $k$ largest singular values and the small singular values, which facilitates rank detection.
In addition, the normal rank is often known by the source of the problem. 
We propose a rank correction scheme for this case.

There is a related method to reduce the singular pencil to a problem of order $k$ by randomly combining rows and columns \cite{Hochstenbach2023a} when the normal rank is known.
The method described in this paper presents another way by selecting $k$ rows and columns, from the pivot selection of the LU factorization of $A-\sigma B$.
We compare both approaches numerically.

The paper proposes a concept rather than a complete solution: sparse linear system solvers that remove or add rows and columns do not exist and are a major challenge in software development.
Also, reliable pivoting schemes for singular matrices are currently missing.
However, for rectangular problems of full rank, we show how existing sparse direct solvers can be used.

The remainder of the paper is organized as follows. In Section~\ref{sec:prelim}, we introduce the background information and the spectral properties of the regularized pencils. Section~\ref{sec:arnoldi} presents the Arnoldi method for the regularized pencils.
Section~\ref{sec:lu} presents
a sparse LU decomposition that determines $F$ and $G$. In Section~\ref{sec:rec}, we extend our method to rectangular problems. Numerical examples are presented in Section~\ref{sec:examples} to illustrate the proposed method and to compare with randomized methods. Finally, Section~7, contains the main conclusions of the work.

\section{Two regularization pencils}\label{sec:prelim}

In this section, we first introduce the Kronecker canonical form for singular matrix pencils. Then, two regularized eigenvalue problems are presented and the corresponding spectral properties are analyzed.


For matrix pencil $A-\lambda B$ of size $n\times m$, Kronecker's theory for singular pencils \cite[Chapter XII.4]{gantmacher1959} shows that there exist matrices $P$ and $Q$ of orders $n\times n$ and $m\times m$ respectively, such that
\begin{equation*}
P(A-\lambda B)Q=\mathrm{diag}(\lambda I-J,\lambda N-I, L_{m_1}(\lambda),\ldots,L_{m_k}(\lambda), L_{n_1}(\lambda)^\top,\ldots,L_{n_l}(\lambda)^\top),
\end{equation*}
\noindent where $I$ is the identity matrix, $J$ is a Jordan matrix, $N$ is a nilpotent Jordan matrix, $L_i(\lambda)=[{\bf 0}\ I_i]-\lambda[I_i\ {\bf 0}]$ is of size $i\times(i+1)$.

This decomposition is called the Kronecker canonical form of matrix pencil $A-\lambda B$, where $\mathrm{diag}(\lambda I-J,\lambda N-I)$ is the regular part, including both a finite eigenvalue block and an infinite eigenvalue block, and $\mathrm{diag}(L_{m_1}(\lambda),\ldots,L_{m_k}(\lambda), L_{n_1}(\lambda)^\top,\ldots,L_{n_l}(\lambda)^\top)$ is the singular part, where the ``spurious'' eigenvalues come from.
Usually only the regular part (in particular, the finite eigenvalues block) is of interest to applications. 

We have a right eigenvector $x$ and left eigenvector $y$ associated with $\lambda$ iff the rank of $A-\lambda B$ is less than the normal rank, $k$.
This implies that $x$ and $y$ lie in a subspace of dimension $n-k+1$ (for a simple eigenvalue). Uniqueness can be introduced by restricting the eigenvector to a subspace of dimension $k$, chosen so that the intersection with nullspace of $A-\lambda B$ is the true eigenspace \cite{Hochstenbach2019} \cite{Hochstenbach2023a}.
This restriction can be written in the form $\matV^* x=0$ and $\matW^* y=0$ with $\matV$ and $\matW$ of rank $n-k$.
\begin{example}\label{ex:kronecker}
Let
\begin{equation}
A-\lambda B=
\begin{bmatrix}
\lambda-1 & & & \\
& -\lambda & 1 &\\
& & 0 & -\lambda\\
& & & 1
\end{bmatrix},
\end{equation}
which is of normal rank $3$.
For any $\lambda_0\in\mathbb{C}$, $A-\lambda_0 B$ has null vector $(0,1,\lambda_0,0)^T$.
The pencil has one eigenvalue $\lambda=1$, for which
the nullspace is spanned by the vectors $\{(0,1,1,0)^*,(1,0,0,0)^*\}$.
The eigenvector $x$ becomes unique by adding the constraint $\matW^*{x}={0}$ for a well chosen vector $\matW\in\mathbb{C}^n$.
For other $\lambda\neq1$, with a good choice of $\matW$,
the nullvectors are eliminated since they live in a one-dimensional subspace, which is, by the constraint, reduced to dimension zero.
A similar reasoning is used for the left eigenspace i.e., all $y\in\mathbb{C}^n$, with $(A-B)^* y=0$, by adding the constraint $F^* y=0$ for a well chosen vector $F\in\mathbb{C}^n$.
\end{example}





We use two regular eigenvalue problems for imposing this constraint. 
Let $\matV,\matW\in\mathbb{C}^{n\times(n-k)}$:
\begin{enumerate}
    \item the augmented pencil is defined as
    \begin{equation}\label{eq:AGEP}\tag{AGEP}
\begin{bmatrix}
A & \matW\\
\matV^* & 0
\end{bmatrix} - \lambda
\begin{bmatrix}
B & 0\\
0 & 0
\end{bmatrix}.
\end{equation}

\item Let $[\matV, \matV_\perp]$ and $[\matW, \matW_\perp]$ be square and full rank. 
The projected (regular) eigenvalue problem becomes
\begin{equation}\label{eq:PGEP}\tag{PGEP}
\matW_\perp^* A \matV_\perp -\lambda \matW_\perp^* B \matV_\perp.
\end{equation}
\end{enumerate}

%
We exploit a strong connection between \eqref{eq:PGEP} and \eqref{eq:AGEP} \cite[Proposition 4.1 \& Proposition 5.1]{Hochstenbach2023a}.
This connection is expressed in the following theorem.

\begin{theorem}
Let $\matV$ and $\matW$ have full rank.
Let $F^\dagger,G^\dagger\in\mathbb{C}^{(n-k)\times n}$ be such that $F^\dagger F=G^\dagger G=I$
and $F_\perp^*F=0$, and $G_\perp^*G=0$.

Then, \eqref{eq:AGEP} is regular, iff \eqref{eq:PGEP} is regular.
In addition,\begin{enumerate}
\item \eqref{eq:AGEP} and \eqref{eq:PGEP} have the same finite eigenvalues.
\item Right and left eigenvectors of \eqref{eq:AGEP} have the form
\[
\begin{pmatrix}
    z \\ -\matW^\dagger (A-\lambda B) z
\end{pmatrix}\quad,\quad \begin{pmatrix}
    w \\ -\matV^\dagger(A-\lambda B)^* w
\end{pmatrix},
\]
respectively, with $\matV^* z = \matW^* w = 0$, and the right and left eigenvectors of \eqref{eq:PGEP} are
$\matV_\perp^* z$ and $\matW_\perp^* w$, respectively.

\item $\lambda$ is a regular eigenvalue of \eqref{eq:GEP}, iff $\matW^\dagger (A-\lambda B) z=\matV^\dagger(A-\lambda B)^* w=0$.

\item \eqref{eq:AGEP} has an infinite eigenvalue with algebraic multiplicity at least $2n-2k$.
\end{enumerate}
\end{theorem}
\begin{proof}
Multiplying \eqref{eq:AGEP} on the left and the right with, respectively,
\[
\begin{bmatrix} \matW^\dagger & 0 \\ \matW_\perp^* & 0 \\ 0 & I \end{bmatrix}\quad\text{and}\quad
\begin{bmatrix} \matV & \matV_\perp & 0 \\ 0 & 0 & I \end{bmatrix},
\]
we obtain
\begin{equation}\label{eq:augmented-simple}
\begin{bNiceMatrix}[first-col,first-row]
& n-k & k & n-k \\
n-k & A_{1,1}-\lambda B_{1,1} & A_{1,2}-\lambda B_{1,2} & \alpha I \\ 
k & A_{2,1}-\lambda B_{2,1} & A_{2,2}-\lambda B_{2,2} & 0 \\ 
n-k & \alpha I & 0 & 0
\end{bNiceMatrix} \begin{pmatrix} \widetilde x_{1,1} \\ \widetilde{x}_{2,1} \\ \widetilde{x}_2 \end{pmatrix} = 0.
\end{equation}
and \eqref{eq:PGEP} becomes $(A_{2,2}-\lambda B_{2,2}) \widetilde{x}_{2,1}=0$.
From the regularity of \eqref{eq:AGEP}, we obtain the regularity of \eqref{eq:PGEP}. 

The infinite eigenvalue of \eqref{eq:AGEP} is associated with eigenvectors of the form $[0,z^T]^T$ for arbitrary $z\in\mathbb{C}^{n-k}$ and of the form $[z^T,0]$ where $Bz=0$, i.e., $z$ in the null space of $B$. Therefore, the algebraic multiplicity, is at least $2n-2k$, which proves statement~4. 


For finite eigenvalues, the eigenvectors must have the form
\begin{equation}\label{eq:SI-AGEP-eigvec}
    \begin{pmatrix} 0 \\ \widetilde{x}_{2,1} \\ -(A_{1,2}-\lambda B_{1,2})\widetilde{x}_{2,1} \end{pmatrix}. 
\end{equation}
We have a similar conclusion for the left eigenvector. This proves statement~2.

All finite eigenvalues are concentrated in $A_{2,2}-\lambda B_{2,2}$.
Statement~3 follows from $(A_{1,2}-\lambda B_{1,2})\widetilde{x}_{2,1}=0$ and $\widetilde{y}_{2,1}^*(A_{2,1}-\lambda B_{2,1})=0$.
%
%
%
%
%
%
%
\end{proof}
Note that the last block of the eigenvector is only zero when the residual $(A-\lambda B)z=0$.
This links the regularized problem to the singular pencil. It also provides a test to verify if an eigenvector of \eqref{eq:AGEP} is an eigenvector of \eqref{eq:GEP}.

\begin{example}
Recall Example~\ref{ex:kronecker}.
A suitable augmented pencil is
\begin{equation}
\left[\begin{array}{c|c}
    A - \lambda B & \matW \\\hline \matV^* & 0
\end{array}\right] = \left[\begin{array}{cccc|c}
\lambda-1 & & & & 0\\
& -\lambda & 1 & & 0\\
& & 0 & -\lambda & 0\\
& & & 0 & 1 \\\hline
0 & 0 & 1 & 0 & 0
\end{array}\right],
\end{equation}
with regular eigenvalue $1$.
There is a double spurious eigenvalue $0$ and a double eigenvalue at $\infty$.
For this choice of $\matV$ and $\matW$, \eqref{eq:PGEP} becomes
\[
\begin{bmatrix}
\lambda-1 & & \\
& -\lambda & \\
& & -\lambda \\
\end{bmatrix},\quad\text{with}\quad F_\perp = {\begin{bmatrix}
    1 & 0 & 0 \\ 0 & 1 & 0 \\ 0 & 0 & 0 \\ 0 & 0 & 1
\end{bmatrix}},\quad G_\perp={\begin{bmatrix}
    1 & 0 & 0 \\ 0 & 1 & 0 \\ 0 & 0 & 1 \\ 0 & 0 & 0
\end{bmatrix}},
\]
which is regular and has a regular eigenvalue $1$ and a double spurious eigenvalue $0$.
\end{example}

\section{Arnoldi method}\label{sec:arnoldi}

We aim to use the shift-and-invert Arnoldi method with shift $\sigma$ \cite{saad92}, i.e.,
the method is applied to the shift-and-invert matrix $S = (A-\sigma B)^{-1} B$.
The method computes the Arnoldi factorization
\[
S V_m = V_m H_m + r_m e_m^T
\]
with $V_m=[v_1,v_2,\ldots,v_m]$ the matrix holding the $m$ Arnoldi vectors, $H_m$ an upper Hessenberg matrix and $r_m$ the residual vector after $m$ iterations.
The Krylov vectors satisfy the orthogonality property $\langle v_i,v_j\rangle=0$, for $i\neq j$, where $\langle v,w\rangle$ denotes the innerproduct used.
%
The eigenvalues of $H_m$ are called Ritz values and are usually good approximations to the dominant eigenvalues of $S$.

\begin{theorem}[Equivalence of Arnoldi for augmented and projected problem]\label{th:arnoldi-augmented-projected}
Let $S^{(A)}$ be the shift-and-invert matrix with shift $\sigma$ for \eqref{eq:AGEP} and $S^{(P)}$ be the shift-and-invert matrix with shift $\sigma$ for \eqref{eq:PGEP}.
Let $m$ iterations of the Arnoldi method on $S^{(P)}$ with starting vector $v_1\in\mathbb{C}^{k}$ and Euclidean innerproduct $\langle v,w\rangle=w^*v$ produce vectors $V_{m+1}^{(P)}$ and Hessenberg matrix $H_m^{(P)}$. 
Let $m$ iterations of the Arnoldi method on \eqref{eq:AGEP} with starting vector
\[
v_1^{(A)}=\begin{pNiceMatrix}[last-col]
\matV_\perp v_1 & n \\ v_2 & n-k \end{pNiceMatrix},
\]
with $v_2$ arbitrary, and the innerproduct
\begin{equation}\label{eq:innerproductAGEP}
\langle v,w\rangle = w^* \begin{bmatrix} I_n & 0 \\ 0 & 0_{n-k} \end{bmatrix} v
\end{equation}
compute vectors $V_{m+1}^{(A)}$ and Hessenberg matrix $H_m^{(A)}$.

Then, $\matV_\perp^*V_{m+1}^{(A)}=V_{m+1}^{(P)}$ and $H_m^{(A)}=H_m^{(P)}$.
\end{theorem}
\begin{proof}
Let us choose $\sigma=0$ for ease of notation. Then $(A-\sigma B)^{-1}B=A^{-1}B$.
With $\widetilde{S}=(G_\perp^* AF_\perp)^{-1} G_\perp B$, the shift-and-invert matrices are
\begin{eqnarray*}
    S^{(P)} & = & A_{2,2}^{-1} B_{2,2} = (G_\perp^* AF_\perp)^{-1} (G_\perp B F_\perp) = \widetilde{S} F_\perp \\
    S^{(A)} & = & \begin{bmatrix} \matV & \matV_\perp & 0 \\ 0 & 0 & I \end{bmatrix} \begin{bmatrix} 0 & 0 & 0 \\ A_{2,2}^{-1} B_{2,1} & A_{2,2}^{-1} B_{2,2} & 0 \\ \star & \star & 0 \end{bmatrix} \begin{bmatrix} \matV^\dagger & 0 \\ \matV_\perp^* & 0 \\ 0 & I \end{bmatrix} \\
    & = & \begin{bmatrix}
        \matV_\perp (\matW_\perp^* A \matV_\perp)^{-1} G_\perp^* B  & 0 \\ \star & 0 
    \end{bmatrix} = \begin{bmatrix}
        \matV_\perp \widetilde{S} & 0 \\ \star & 0
    \end{bmatrix}.
\end{eqnarray*}
Let $M$ be defined by
\[
\begin{bmatrix}
    \matV_\perp \matV_\perp & 0 \\ 0 & I_{n-k}
\end{bmatrix}.
\]
Decompose $z=[z_1^T,z_2^T]^T$ with $z_1\in\mathbb{C}^n$ and $\matV_\perp \matV_\perp^* z_1=z_1$.
Also, we have $\widetilde{S} \matV_\perp \matV_\perp^*=S^{(P)} \matV_\perp^*$.
Then, we have
\[
S^{(A)} z = \begin{pmatrix}
    \matV_\perp \widetilde{S} \matV_\perp \matV_\perp^* z_1 \\ \widetilde{z}_2
\end{pmatrix} = \begin{pmatrix}
    \matV_\perp S^{(P)} \matV_\perp^* z_1 \\ \widetilde{z}_2
\end{pmatrix}.
\]

Let the Arnoldi factorization for $S^{(A)}$ be
\begin{equation}
\label{eq:arnoldi-proof-PGEP}    
S^{(A)} V_m = V_m H_m + r_m e_m^T.
\end{equation}
As a consequence, if $v_1=Mv_1$, all vectors in the Krylov space satisfy $V_m M =V_m$.
So, we have
\begin{equation}\label{eq:Arn-proof}    
M S^{(A)} V_m = M V_m H_m + M r_m e_m^T.
\end{equation}
Decompose
\[
V_{m} = \begin{pNiceMatrix}[last-col]
    V_{m}^{(1)} & n \\ V_{m}^{(2)} &  n-k
\end{pNiceMatrix}.
\]
Rewrite \eqref{eq:Arn-proof} as
\[
\begin{bmatrix}
\matV_\perp & 0 \\ 0 & I
\end{bmatrix} \left( \begin{bmatrix} S^{(P)} \matV_\perp^* V_m^{(1)} \\ \star \end{bmatrix} - \begin{bmatrix}
    \matV_\perp^* V_m^{(1)} H_m + \matV_\perp^* r_{m}^{(1)} e_m^T \\ \star
\end{bmatrix}\right) = 0.
\]
If we take the first block row, we find the Arnoldi recurrence for $S^{(P)}$ with iteration vectors $W_m = \matV_\perp^* V_m^{(1)}$.
Finally, let $H_m$ be computed using the innerproduct \eqref{eq:innerproductAGEP}.
Then
\[
H_m= \begin{bmatrix}
    (V_m^{(1)}) \\ 0 
\end{bmatrix}^* S^{(A)} \begin{bmatrix}
    V_m^{(1)} \\ V_m^{(2)}
\end{bmatrix} = (V_m^{(1)})^* \matV_\perp S^{(P)} \matV_\perp^* V_m^{(1)} = W_m^* S^{(P)} W_m.
\]
This shows the connection between the two Arnoldi factorizations.
\end{proof}
We must ensure that the starting vector of the Arnoldi method on $S^{(A)}$ has the proper form.
This can be achieved by multiplying an arbitrary vector by $S^{(A)}$ or using an implicit restart with zero shift instead \cite{Meerbergen1997}.
The choice of the starting vector combined with the special inner product remove the impact of the infinite eigenvalues introduced by augmentation.
In order to identify true eigenvalues, both left and right eigenvectors have to be computed.
Therefore we use the two-sided Arnoldi method:
\begin{enumerate}
\item Perform Arnoldi on \eqref{eq:AGEP} (resp. \eqref{eq:PGEP}). We obtain Krylov vectors $V_m$.
\item Perform Arnoldi on the transpose of \eqref{eq:AGEP} (resp. \eqref{eq:PGEP}). We obtain Krylov vectors $W_m$.
\item Compute the projected problem
\[
\widehat{A}-\lambda\widehat{B} = W_m^* (\mathbf{A}-\lambda\mathbf{B})V_m 
\]
with $\mathbf{A}-\lambda\mathbf{B}$ referring to \eqref{eq:AGEP} (resp. \eqref{eq:PGEP}).
\item Compute eigentriplets ($\lambda_i,\hat{\bf x}_i,\hat{\bf y}_i$) of $\widehat{A}-\lambda \widehat{B}$ and Ritz triplet ($\lambda_i,{\bf x}_i=V_m\hat{\bf x}_i,{\bf y}_i=W_m\hat{\bf y}_i$) of \eqref{eq:AGEP} (resp. \eqref{eq:PGEP}).
\end{enumerate}

\begin{theorem}
    The two sided Arnoldi method for \eqref{eq:AGEP} and \eqref{eq:PGEP} produce the same $\widehat{A}$ and $\widehat{B}$, when the respective starting vectors are chosen as in Theorem~\ref{th:arnoldi-augmented-projected}.
\end{theorem}
\begin{proof}
Let $V_{m+1}$ denote the Krylov vectors obtained by Arnoldi applied to \eqref{eq:AGEP} and $W_{m+1}$ to the transpose of \eqref{eq:AGEP} with starting vectors as in Theorem~\ref{th:arnoldi-augmented-projected}.
Then
\[
\begin{bmatrix}\matV\\0\end{bmatrix} ^* V_{m+1}=0\quad\text{and}\quad  W_{m+1}^*\begin{bmatrix}
    \matW \\ 0
\end{bmatrix} =0,
\]
in other words,
\[
\begin{bmatrix}
    \matV_\perp \matV_\perp^* & 0 \\ 0 & I
\end{bmatrix} V_{m+1} = V_{m+1}\quad,\quad\begin{bmatrix}
    \matW_\perp \matW_\perp^* & 0 \\ 0 & I
\end{bmatrix} W_{m+1} = W_{m+1}.
\]
Decompose
\[
V_{m} = \begin{pNiceMatrix}[last-col]
    V_{m}^{(1)} & n \\ V_{m}^{(2)} &  n-k
\end{pNiceMatrix}
\quad,\quad
W_{m} = \begin{pNiceMatrix}[last-col]
    W_{m}^{(1)} & n \\ W_{m}^{(2)} &  n-k
\end{pNiceMatrix}
\]
with $V_{m}^{(1)},W_m^{(1)}\in\mathbb{C}^{n\times(m)}$ and $V_m^{(2)},W_m^{(2)}\in\mathbb{C}^{(n-k)\times(m)}$. 
Substitute the decomposition into the projection of \eqref{eq:AGEP}, we have
\begin{eqnarray*}
W_m^* \begin{bmatrix}
    B & 0 \\ 0 & 0
\end{bmatrix} V_m & = & (\matW_\perp^* W_m)^{(1)*} (\matW_\perp^* B \matV_\perp) (\matV_\perp^* V_m)^{(1)} \\
W_m^* \begin{bmatrix}
    A & \matW \\ \matV^* & 0
\end{bmatrix} V_m & = & (\matW_\perp^* W_m)^{(1)*} (\matW_\perp^* A \matV_\perp) (\matV_\perp^* V_m)^{(1)},
\end{eqnarray*} 
which corresponds to the projection of $\matW_\perp^* A \matV_\perp-\lambda \matW_\perp^* B \matV_\perp$ on the Krylov vectors obtained for \eqref{eq:PGEP}.
\end{proof}

\section{Creation of regularized matrices by LU factorization}\label{sec:lu}
This section is about the creation of $\matV$ and $\matW$ in the pencils \eqref{eq:AGEP}, and $\matV_\perp$ and $\matW_\perp$ in \eqref{eq:PGEP}.
The LU factorization, as explained in the following sections, adds a column to $\matV$ and $\matW$ when a zero pivot is encountered during the factorization, with the aim to lift the singularity.
In practice, a test on zero is replaced by a test on small absolute values.
In this section, we explore the LU factorization using a pivoting strategy (e.g., partial, rook \cite{POOLE2000353} or complete pivoting \cite{MIRANIAN20031}) of a given matrix $A$; $\matV$ and $\matW$ are obtained in the process as by-products.

The LU factorization comes in different flavours. 
For a nonsingular $A$, we determine $PAQ=LU$ where $L$ is lower triangular with ones on its main diagonal, $U$ is upper triangular with nonzero elements on its main diagonal, and $P$ and $Q$ are permutation matrices, i.e., $P$ and $Q$ are a perturbation of the rows and columns of the identity matrix, respectively.

\subsection{Factorization for the augmented pencil}
We consider a right-looking variant of the LU factorization, where in step $i$, the first $i-1$ columns of $L$ are computed, the first $i-1$ rows of $U$ and the the right bottom block of
$U$ has the Schur complement.

Let, for a singular matrix $A$, at step $i\leq n$ the augmented matrix be denoted as
\[
\mathbf A_i = \begin{bmatrix}
    A & \matW_i \\ \matV^*_i & 0
\end{bmatrix} \in\mathbb{C}^{n_i\times n_i}
\]
where $\matV_i$ and $\matW_i$ have the same dimensions $n\times (n_i-n)$ and $n_i$ is the number of rows and columns of $\mathbf A_i$.
The initial matrix $\mathbf A_0=A$, i.e., with empty $\matV_0$ and $\matW_0$ and $n_0=n$.
In step $i$, the $i$th pivot is determined.
The pivot is searched in the submatrix composed of the rows $i$ to $n_i$ and columns $i$ to $n$.

We have from the previous step $i-1$ that
\[
P_{i-1} \mathbf A_{i-1} Q_{i-1} = L_{i-1} U_{i-1} \in\mathbb{C}^{n_{i-1}\times n_{i-1}}
\]
with $P_{i-1}$ and $Q_{i-1}$ permutation matrices and
\begin{equation}\label{eq:lu-step i}
L_{i-1} = \begin{bmatrix} L_{1,1}^{({i-1})} & 0 \\ L_{2,1}^{{(i-1)}} & I \end{bmatrix} \quad
U_{i-1} = \begin{bmatrix} U_{1,1}^{({i-1})} & U_{1,2}^{({i-1})} \\ 0 & U_{2,2}^{({i-1})} \end{bmatrix},
\end{equation}
where $U_{2,2}^{(i-1)}$ is the Schur complement of the right bottom $(n_{i-1}-(i-1))\times (n_{i-1}-(i-1))$ block of $\mathbf A_{i-1}$.
We call $L_{i-1}$ and $U_{i-1}$ that satisfy \eqref{eq:lu-step i} $L$ and $U$ factors of step ${i-1}$ of the LU factorization.
Here, matrix $L_{i-1}$ has ones on the main diagonal and is therefore invertible, $U_{1,1}^{({i-1})}$ is also invertible, $U_{2,2}^{({i-1})}$ may not be invertible.

In step $i$ there are two possibilities.
First, if there is a nonzero pivot using the pivoting strategy, then the factorization can proceed as in the usual way.
Second, if there is no nonzero pivot, we swap the columns so that the pivot column lies in column $i$, then we add a column to $\matV$ and $\matW$ to obtain
\begin{equation}\label{eq:addVW}
\begin{bmatrix}P_{i-1} & 0 \\ 0 & 1
\end{bmatrix} \begin{bmatrix} \mathbf{A}_i & g_{i} \\ f_{i}^* & 0 \end{bmatrix} \begin{bmatrix} \widetilde Q_i & 0 \\ 0 & 1
\end{bmatrix}  =
    \begin{bmatrix}
        L_{i-1} & 0 \\ 0 & 1
    \end{bmatrix}
\begin{bmatrix}
        U_{i-1} & u_{i} \\ f_i^* \widetilde{Q}_i & 0
    \end{bmatrix}
\end{equation}
with $L_{i-1} u_i=P_{i-1} g_{i}$.
Matrix $\widetilde Q_i$ accumulates the column pivots, including the pivoting at step $i$.
Note that we put a zero element below $u_i$ because of the zero at the same position in the left hand side.
At this stage no row pivoting has been performed yet.
Define
\[
Q_i=\begin{bmatrix}
    \widetilde {Q}_i & 0 \\ 0 & 1
\end{bmatrix}.
\]

There is a large amount of freedom in choosing $f_{i}$ and $g_{i}$.
Let us now choose $f_{i}\widetilde{Q}_i=\alpha e_{i}$, so that $\alpha$ becomes the pivot and we can proceed with step $i$ of the LU factorization.
The choice of $g_{i}$ is discussed further.
Now partition into
\[
\begin{bmatrix}P_{i-1} & 0 \\ 0 & 1
\end{bmatrix} \begin{bmatrix} \mathbf A_i & g_{i} \\ \alpha e_{i}^* \widetilde{Q}_i^* & 0 \end{bmatrix} \begin{bmatrix} \widetilde Q_i & 0 \\ 0 & 1
\end{bmatrix} =
    \begin{bmatrix}
        L_{1,1} & 0 & 0 \\ L_{2,1} & I & 0 \\ 0 & 0 & 1
    \end{bmatrix}
\left[\begin{array}{c|cc}
        U_{1,1} & U_{1,2} & u_1 \\\hline 0 & U_{2,2} & u_{2} \\ 0 & \alpha e_1^* & 0
    \end{array}\right]
\]
It immediately becomes clear why $f_{i}\widetilde{Q}_i=\alpha e_{i}^*$ is a good choice: the U-matrix has block upper triangular structure with the Schur complement in the bottom right corner, and the addition of a row does not require the elimination of the first ${i-1}$ elements of this new row.
We swap rows $i$ and $n_i=n_{i-1}+1$ and define $P_{i}$ that accumulates all pivoting so far into
\begin{equation}\label{eq:pivot step i}
P_{i} \begin{bmatrix} \mathbf A_i & g_{i} \\ f_{i}^* & 0 \end{bmatrix} Q_i =
    \begin{bmatrix}
        L_{1,1} & 0 & 0 \\ 0 & 1 & 0 \\ \widetilde L_{2,1} & 0 & I
    \end{bmatrix}
\begin{bmatrix}
        U_{1,1} & U_{1,2} & u_{1} \\ 0 & \alpha  e_1^* & 0 \\ 0 & \widetilde U_{2,2} & u_{2}
    \end{bmatrix}.
\end{equation}
The first column of $\widetilde U_{2,2}$ is zero, so, we have an LU factorization at step $i$.
\begin{lemma}\label{le:P}
    \[
P_i = \begin{bmatrix}
    P_{1,1}^{(i)} & P_{1,2}^{(i)} \\ P_{2,1}^{(i)} & 0
\end{bmatrix}
\]
with $P_{1,1}^{(i)}$ an $n\times n$ matrix and the other blocks with corresponding dimensions.
\end{lemma}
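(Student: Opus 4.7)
The plan is to establish the claim by induction on $i$, with the invariant that the permutation $\sigma_i:\{1,\ldots,n_i\}\to\{1,\ldots,n_i\}$ underlying $P_i$ satisfies $\sigma_i(p)\leq n$ whenever $p>n$. This is exactly equivalent to the bottom-right $(n_i-n)\times(n_i-n)$ block of $P_i$ being zero, since that block has a one in row $p$, column $q$ (with $p,q>n$) only if $\sigma_i(p)=q>n$. Writing things in terms of $\sigma_i$ rather than the matrix $P_i$ directly will make the row-swap bookkeeping cleaner. The base case is trivial: before any bordering, $P_i=I_n$ and there is no bottom-right block to check.

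The key observation supporting the induction is that every border row is \emph{frozen} at its pivot position once it is introduced. Indeed, when a bordering occurs at step $j$, we add $\mathbf{v}_j^*=\alpha\mathbf{e}_j^*$ at position $n_j$ and immediately swap it with row $j$, so position $j$ holds the border row and row $j$ of $U_j$ is simply $\alpha\mathbf{e}_j^*$. In particular, its corresponding row of $L_j$ is $\mathbf{e}_j^*$, so the row never participates in further elimination. Moreover, pivot selection at any later step $i>j$ ranges only over positions $i,i+1,\ldots,n_{i-1}$, which excludes $j$. Consequently, throughout the whole factorization, the rows at positions $i,i+1,\ldots,n_{i-1}$ at the start of step $i$ are never border rows; they descend from original rows of $A$, possibly displaced by earlier pivots.

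With this in hand, the inductive step splits into two cases. In the nonzero-pivot case, $P_i$ is obtained from $P_{i-1}$ by a single swap of positions $i$ and $k$ with $i\le k\le n_{i-1}$. By the frozen-row observation, both positions contain original rows; by the induction hypothesis applied to position $k$ (if $k>n$), what moves into position $k$ after the swap (namely the row previously at position $i$) is also an original row, so the invariant $\sigma_i(p)\le n$ for $p>n$ is preserved. In the bordering case, one appends a fresh row/column to enlarge to $n_i=n_{i-1}+1$, extends the permutation by a $1$ in the new diagonal entry, and then swaps positions $i$ and $n_i$. Before the swap, position $i$ holds an original row (frozen-row observation), so after the swap position $n_i>n$ holds an original row, while the newly inserted border row lands at position $i\le n$; the invariant again holds.

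The main obstacle is purely notational rather than mathematical: one must carefully track that the enlargement step $P_{i-1}\mapsto\begin{bmatrix}P_{i-1}&0\\0&1\end{bmatrix}$ preserves the block-structure invariant (since the added row and column each introduce a single $1$ on the new diagonal, neither in the old bottom-right block nor creating a new nonzero there), and that the subsequent swap of positions $i$ and $n_i$ only affects entries of $P_i$ in those two rows. Once this is carefully written, the induction closes and the claimed partitioned form of $P_i$ follows immediately, with $P_{1,1}^{(i)}$ being the $n\times n$ top-left block by construction.
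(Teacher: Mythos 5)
Your proof is correct and follows essentially the same inductive path as the paper's. Both rest on the observation that the pivot at step $k$ can only swap the row at position $k$ with a row at some position $\geq k$, so a border row, once swapped to its pivot position $j$, stays there permanently, and every position above $n$ (which always lies in the range $\geq i$) must hold an original row of $A$. The paper packages this as a finer $3\times 3$ block invariant of $P_i$ (with $P_{1,1}$ of size $i\times i$ and zeros in the $(2,3)$ and $(3,3)$ blocks); your formulation via the underlying permutation together with the explicit frozen-row observation is an equivalent restatement of the same invariant and the same case analysis.
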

\begin{proof}
We prove that, after step $i$, the permutation matrix has the following structure:
\begin{equation}\label{eq:proof le}
P_{i} =  \left[\begin{array}{cc|c}
P_{1,1} & P_{1,2} & P_{1,3} \\\hline
P_{2,1} & P_{2,2} & 0 \\
P_{3,1} & P_{3,2} & 0
\end{array}\right]
\end{equation}
with $P_{1,1}$ an $i\times i$ matrix, $P_{2,2}$ an $(n-i)\times (n-i)$ matrix and $P_{3,1}$ an $(n_{i}-n)\times i$ matrix and the other matrices matching dimensions.
When $n_{i}=n$ for some $i$, then this is certainly true, since $P_{1,3}, P_{3,1}$, and $P_{3,2}$ are empty matrices.


Let us assume, it is true for $i$, then there are two situations:
the next pivot row $j$ lies between $i+1$ and $n_i$. In this case, swapping rows $i+1$ and $j$ of $P_i$ preserves the zero blocks and $P_{i+1}$ has the same structure as $P_i$;
second, the matrix is expanded and the next pivot is $n_{i+1}=n_i+1$. Then, we interchange rows $i+1$ and $n_{i+1}$ of
\[
\begin{bmatrix} P_{i} & 0 \\ 0 & 1 \end{bmatrix} =  \left[\begin{array}{cc|cc}
P_{1,1} & P_{1,2} & P_{1,3} & 0\\\hline
P_{2,1} & P_{2,2} & 0 & 0\\
P_{3,1} & P_{2,3} & 0 & 0 \\
0 & 0 & 0 & 1
\end{array}\right]
\]
which leads to the structure
\[
\left[\begin{array}{cc|cc}
P_{1,1} & P_{1,2} & P_{1,3} & 0 \\
\widetilde{P}_{2,1} & \widetilde{P}_{2,2} & 0 & 1\\\hline
\widetilde{P}_{3,1} & \widetilde{P}_{2,3} & 0 & 0 \\
\widetilde{P}_{4,1} & \widetilde{P}_{4,2} & 0 & 0
\end{array}\right].
\]
This again respects the structure that we have, which proves the lemma.
\end{proof}
 
With this procedure we can have a valid LU factorization after $n$ steps.
%
In the remainder of the section, the pivot is selected in the submatrices with row indices between $i$ and $n_i$ and column indices between $i$ and $n$.
We will show later that this is fine for rank detection.
By the fact the pivots are not searched in columns of $\matW$,
\[
Q_n = \begin{bmatrix}
    \star & 0 \\ 0 & I_{n-k}
\end{bmatrix}.
\]

After the $n$th step, we have
\begin{equation}\label{eq:PnLU}
P_{n} \begin{bmatrix}
    A & \matW \\ \matV^* & 0
\end{bmatrix} Q_n = \begin{bmatrix}
    L_{1,1}^{(n)} & 0 \\ L_{2,1}^{(n)} & I
\end{bmatrix} \begin{bmatrix}
    U_{1,1}^{(n)} & U_{1,2}^{(n)} \\ 0 & U_{2,2}^{(n)}
\end{bmatrix}
\end{equation}
where $\matW$, $U_{1,2}^{(n)}$, and $U_{2,2}^{(n)}$ have to be chosen appropriately.
The remaining pivots are now chosen from $U_{2,2}^{(n)}$.

By Lemma~\ref{le:P}, the situation is really elegant.
In this case, after step $n$,
\begin{eqnarray*}
\begin{bmatrix}
    \matW \\ 0
\end{bmatrix} & = & \begin{bmatrix} P_{1,1}^* & P_{2,1}^* \\ P_{1,2}^* & 0  \end{bmatrix}
\begin{bmatrix} L_{1,1}^{(n)} & 0 \\ L_{2,1}^{(n)} & I
\end{bmatrix} \begin{bmatrix} U_{1,2}^{(n)} \\ U_{2,2}^{(n)} \end{bmatrix}  \\
    & = &
\begin{bmatrix} P_{1,1}^* L_{1,1} U_{1,2} + P_{2,1}^* (L_{2,1} U_{1,2} + U_{2,2}) \\ P_{1,2}^* L_{1,1} U_{1,2} \end{bmatrix}
\end{eqnarray*}
A solution now is very simple: choose $U_{1,2}=0$ and $U_{2,2}=\alpha I$ so that $\matW=\alpha P_{2,1}^*$.
Note that an explicit factorization after step $n$ is not needed since $U_{2,2}=\alpha I$, which is already in factored form.
Also note that $\matW$ has full rank by construction since $P_n$ is a permutation matrix.

The obtained augmented pencil is definitely regular by construction, since $L$ and $U$ are non-singular. We still have to prove that the dimension is indeed the minimal dimension to uncover the normal rank.
\begin{lemma}\label{le:aux}
Using the partitioning of \eqref{eq:PnLU}, we have
\begin{enumerate}
    \item $P_{1,2}^* L_{1,1} = P_{1,2}^*$,
    \item $L_{1,1} P_{1,2}= P_{1,2}$, and $L_{1,2}P_{1,2}=0$
    \item $L_{1,2}P_{1,2}=0$,
    \item $P_{1,2}^* P_{1,1}=0$ and $P_{1,1}^* P_{1,2}=0$.
\end{enumerate}
\end{lemma}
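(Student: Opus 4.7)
The plan is to exploit the special unit-vector structure that case~(b) steps impose on $L$. At such a step $i$, the border row swapped into position $i$ by \eqref{eq:pivot step i} is $\alpha {\bf e}_i^*$, which has no entries in columns below~$i$, so no multipliers are recorded in positions $(i,1), \ldots, (i,i-1)$ of $L$. Moreover, before step~$i$ the $i$-th column of the Schur complement is zero (otherwise case~(b) would not have been needed), so the entries of $L$ in column~$i$ below the diagonal are all zero as well. Hence immediately after step~$i$, the $i$-th row of $L$ is ${\bf e}_i^*$ and the $i$-th column of $L$ is ${\bf e}_i$. Because subsequent partial pivoting only touches rows indexed strictly greater than the current step, and each later bordering expansion only appends a zero to the finalized column~$i$ of $L$, this structure persists through step~$n$. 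In particular, for every case~(b) index~$i$, the $i$-th row of $L_{1,1}^{(n)}$ is ${\bf e}_i^*$, the $i$-th column of $L_{1,1}^{(n)}$ is ${\bf e}_i$, and the $i$-th column of $L_{2,1}^{(n)}$ is zero.

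Next I identify the case~(b) indices with the columns of $P_{1,2}$. By the proof of Lemma~\ref{le:P}, a border row added at step $i$ is moved to row~$i$ of $P_n {\bf A}$ and stays there, so every border row ends up in the top $n$ rows, precisely at the case~(b) positions $i_1, \ldots, i_m$. Therefore the columns of $P_{1,2}$ are ${\bf e}_{i_1}, \ldots, {\bf e}_{i_m}$. Claim~1 follows because the $k$-th row of $P_{1,2}^* L_{1,1}$ is the $i_k$-th row of $L_{1,1}$, which equals ${\bf e}_{i_k}^*$; similarly, Claim~2 follows because the $k$-th column of $L_{1,1} P_{1,2}$ is the $i_k$-th column of $L_{1,1}$, which equals ${\bf e}_{i_k}$.

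I interpret Claim~3 as $L_{2,1} P_{1,2} = 0$, since the literal $L_{1,2}$ is the zero block above $L_{2,1}$ by construction and would make the statement trivial. This is immediate: the $k$-th column of $L_{2,1} P_{1,2}$ is the $i_k$-th column of $L_{2,1}^{(n)}$, which vanishes by the first paragraph. Claim~4 then follows from unitarity of the permutation: expanding $P_n^* P_n = I$ in the block form of $P_n$ yields $P_{1,1}^* P_{1,2} = 0$ as the $(1,2)$ block, and its conjugate transpose is $P_{1,2}^* P_{1,1} = 0$.

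The main obstacle is really the first paragraph, where one must carefully track that the unit-vector structure of $L$ at a case~(b) step is preserved by later pivoting and bordering. The argument is essentially bookkeeping: row swaps among indices $> i$ permute only zero entries in column~$i$, and later bordering extensions only append a zero to column~$i$ and a zero at position $i$ of the new column, neither of which disturbs the finalized structure.
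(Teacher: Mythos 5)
Your proposal is correct and follows essentially the same route as the paper's own proof: identify the breakdown steps $i_1,\ldots,i_\ell$, observe that at such positions row and column of $L$ are unit vectors, note that $P_{1,2}$ selects exactly those rows/columns, and use orthogonality of $P$ for the last item. You supply more of the bookkeeping (why the unit-vector structure of $L$ at a breakdown position persists under later pivoting and bordering expansions, which the paper only gestures at via \eqref{eq:pivot step i}), and you correctly read the mislabeled $L_{1,2}$ in items~2 and~3 of the statement as $L_{2,1}$, matching what the paper actually proves.
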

\begin{proof}
At step $n$, we have the factorization
\begin{equation}\label{eq:LU-proof-1}
P \begin{bmatrix} A & \matW \\ \matV^* & 0 \end{bmatrix} Q = \begin{bmatrix} L_{1,1} & 0 \\ L_{2,1} & I \end{bmatrix} \begin{bmatrix} U_{1,1} & 0 \\ 0 & U_{2,2} \end{bmatrix}
\end{equation}
with $L_{1,1}$ and $U_{1,1}$ of dimension $n\times n$ and the other blocks with matching dimensions.

Denote all steps $i$ of a breakdown of the LU factorization, i.e., all $i$, for which a row to $\matV^*$ is added, by $i_1,\ldots,i_{n_n-n}$: in step $i_j$ row $i_j$ is swapped with row $n+j$.
From \eqref{eq:pivot step i}, we see that, after pivoting, the $i_j$th row of $L$ corresponding to such a breakdown step is zero, except for the diagonal position which is one.
Note that the columns of $P_{1,2}$ contain a value one in the $(i_j,j)$ positions, for $j=1,\ldots,n_n-n$.
Since $P_{1,2}^* L_{1,1}$ is the selection of rows of $L_{1,1}$ that corresponds to the breakdown steps in the factorization and $L_{1,1}$ has a zero row and column in such position (except the main diagonal element),
we have that $P_{1,2}^* L_{1,1}=P_{1,2}^*$.
Similarly, $L_{1,1} P_{1,2}= P_{1,2}$ and $L_{2,1}P_{1,2}=0$.
Because of the orthogonality of $P$, we have that $P_{1,2}^* P_{1,1}=0$ and $P_{1,1}^* P_{1,2}=0$.
\end{proof}
\begin{theorem}\label{th:rank AsigmaB}
Let $k$ be the rank of $A-\sigma B$.
Let $\matV$ and $\matW$ be determined as explained in this section, then the number of columns of $F$ and $G$ is $n-k$. 
\end{theorem}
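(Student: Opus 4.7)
The plan is to prove $b = n - k$ (where $b$ is the number of columns added to $V$ and $W$ during the factorization) by establishing the two inequalities $b \ge n - k$ and $b \le n - k$ separately.

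For the lower bound I would appeal to the fact that $\mathbf{A}_n$ is nonsingular by construction: the preceding discussion produces a valid LU factorization whose $L_n$ is unit lower triangular and whose $U_n$ has all nonzero diagonal entries (the last $b$ pivots coming from the choice $U_{2,2}^{(n)} = \alpha I$), so $\mathrm{rank}(\mathbf{A}_n) = n + b$. Decomposing
\[
\mathbf{A}_n = \begin{pmatrix} A & 0 \\ 0 & 0 \end{pmatrix} + \begin{pmatrix} 0 & W \\ V^* & 0 \end{pmatrix},
\]
subadditivity of rank together with $\mathrm{rank}(V^*), \mathrm{rank}(W) \le b$ yields $n + b \le k + 2b$, hence $b \ge n - k$.

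For the upper bound I would compare the first $n$ columns on both sides of $P_n \mathbf{A}_n = L_n U_n$ block by block. The first $n$ rows give $P_{1,1} A + P_{1,2} V^* = L_{1,1} U_{1,1}$ and the last $b$ rows give $P_{2,1} A = L_{2,1} U_{1,1}$. Eliminating $U_{1,1}$ yields
\[
P_{2,1} A = L_{2,1} L_{1,1}^{-1}\bigl(P_{1,1} A + P_{1,2} V^*\bigr).
\]
By Lemma~\ref{le:aux}, the identity $L_{1,1} P_{1,2} = P_{1,2}$ gives $L_{1,1}^{-1} P_{1,2} = P_{1,2}$, which combined with $L_{2,1} P_{1,2} = 0$ kills the $V^*$ contribution, leaving $P_{2,1} A = L_{2,1} L_{1,1}^{-1} P_{1,1} A$. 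The orthogonality $P_{1,2}^* P_{1,1} = 0$ then forces $P_{1,1}$ to have zero rows at breakdown positions, so $P_{1,1} A$ has at most $n - b$ nonzero rows, each being an original pivot row of $A$. Consequently every row of $A$ lies in the row span of just $n - b$ of its own rows, giving $\mathrm{rank}(A) \le n - b$, i.e., $b \le n - k$.

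Combining the two inequalities gives $b = n - k$, which is the claimed number of columns of $V$ and $W$. I expect the main obstacle to be the upper bound: the algebraic elimination of the $V^*$ contribution via Lemma~\ref{le:aux} must be done carefully, and the subsequent row-span argument requires identifying precisely that $P_{1,1} A$ selects the $n - b$ original pivot rows. The lower bound, by contrast, is essentially a rank-counting argument that is immediate once one observes that the bordered matrix has full rank by construction.
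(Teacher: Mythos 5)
Your proposal is correct but takes a genuinely different route from the paper. The paper builds an auxiliary permutation $\tilde P$ from the blocks of $P$ and a free permutation $Y$, conjugates the LU factorization $P_n\mathbf{A}_n=L_nU_n$ by $\tilde P$, and reads off the reduced factorization $YA=\tilde L_{1,1}\tilde U_{1,1}$, from which it obtains $\mathrm{rank}(A)=\mathrm{rank}(\tilde U_{1,1})=\mathrm{rank}(P_{1,1})=n-b$ all at once. Your argument avoids the auxiliary permutation entirely by splitting the equality into two inequalities: the lower bound $b\ge n-k$ is a clean rank-subadditivity count using only the nonsingularity of $\mathbf{A}_n$ (a fact the paper constructs but never isolates as a half of the argument), and the upper bound $b\le n-k$ extracts the same structural content as the paper's $\tilde P$ manipulation, but directly from the first-$n$-column block relations of $P_n\mathbf{A}_n=L_nU_n$ together with Lemma~\ref{le:aux}, yielding the row-span inclusion $P_{2,1}A=L_{2,1}L_{1,1}^{-1}P_{1,1}A$. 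Both arguments ultimately express the same core fact — the $b$ rows of $A$ that pivoting moves to the bottom lie in the row span of the remaining $n-b$ rows — but your version is easier to audit one inequality at a time, whereas the paper's version additionally produces the clean factorization of $YA$ that is reused almost verbatim in the proof of Theorem~\ref{th:perturbation tau}. One small wrinkle to flag: Lemma~\ref{le:aux} as stated writes $L_{1,2}P_{1,2}=0$, but $L$ has no $L_{1,2}$ block; you correctly used $L_{2,1}P_{1,2}=0$, which is what the lemma's own proof establishes.
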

\begin{proof}
Assume that $A-\sigma B$ has rank $k$.
Let $\sigma$ be zero to simplify notation.

At step $n$, we have the factorization \eqref{eq:LU-proof-1},
with $L_{1,1}$ and $U_{1,1}$ of dimension $n\times n$ and the other blocks with matching dimensions.
As in Lemma~\ref{le:aux}, denote all steps $i$ of a breakdown of the LU factorization, by $i_1,\ldots,i_{n_n-n}$.

We are applying a permutation matrix that brings the added rows $\matV^*$ back to the bottom of the matrix.
To achieve this, we define
\[
\widetilde P = \begin{bmatrix}
 Y P_{1,1}^* & Y P_{2,1}^* \\  
    P_{1,2}^* & 0
\end{bmatrix}
\]
on the left of \eqref{eq:LU-proof-1}.
Matrix $Y$ is chosen so that $\widetilde{P}$ is a permutation matrix, i.e., $Y$ itself can be any order $n$ permutation matrix.
Let
\[
Q = \begin{bmatrix} \widetilde{Q} & 0 \\ 0 & I \end{bmatrix}.
\]
Since from Lemma~\ref{le:aux}, $P_{1,1}P_{1,2}^* =0$, we have that
\begin{eqnarray*}
\widetilde P P \begin{bmatrix}
    A & \matW \\ \matV^* & 0
\end{bmatrix}Q & = & \begin{bmatrix}
    Y A\widetilde{Q} & Y \matW \widetilde{Q} \\ \matV^*\widetilde{Q} & 0
\end{bmatrix}.
\end{eqnarray*}
For the application on $L$, we have, again using Lemma~\ref{le:aux},
\begin{eqnarray*}
\widetilde P L \widetilde{P}^* & = & \begin{bmatrix}
    Y \begin{bmatrix}
        P_{1,1} \\ P_{2,1}
    \end{bmatrix}^* \begin{bmatrix}
        L_{1,1} \\ L_{2,1}
    \end{bmatrix} & Y P_{2,1}^* \\ P_{1,2}^* L_{1,1} & 0
\end{bmatrix} \begin{bmatrix} P_{1,1} Y^* & P_{1,2} \\ P_{2,1} & 0 \end{bmatrix} \\
& = & 
\begin{bmatrix}
    \widetilde L_{1,1} & \widetilde{L}_{1,2} \\ P_{1,2}^* L_{1,1} P_{1,1} Y^* & P_{1,2}^* L_{1,1} P_{1,2}
\end{bmatrix} \\
\widetilde{L}_{1,2} & = & Y \begin{bmatrix} P_{1,1} \\ P_{2,1} \end{bmatrix}^* \left(\begin{bmatrix} L_{1,1} \\ L_{2,1} \end{bmatrix} P_{1,2}\right)
    = Y \begin{bmatrix} P_{1,1} \\ P_{2,1} \end{bmatrix}^* \begin{bmatrix} P_{1,2} \\ 0 \end{bmatrix} = 0,\\
P_{1,2}^* L_{1,1} P_{1,1} Y^* & = & P_{1,2}^* P_{1,1} Y^* = 0, \\
P_{1,2}^* L_{1,1} P_{1,2} & = &  P_{1,2}^* P_{1,2} = I.
\end{eqnarray*}
We conclude that
\[
\widetilde P L \widetilde{P}^* = \begin{bmatrix} \widetilde L_{1,1} & 0 \\ 0 & I \end{bmatrix}.
\]
For the U-factor, we have
\begin{eqnarray*}
    \widetilde{P} U & = & \begin{bmatrix} Y P_{1,1}^* U_{1,1} & Y P_{2,1}^* U_{2,2} \\ P_{1,2}^* U_{1,1} & 0 \end{bmatrix} = \begin{bmatrix} \widetilde  U_{1,1} & \widetilde U_{2,2} \\ \matV^*\widetilde{Q} & 0 \end{bmatrix} .
\end{eqnarray*}
As a result, we have
\begin{equation}
\begin{bmatrix}
    Y A & Y \matW \\ \matV^* & 0
\end{bmatrix}Q = \begin{bmatrix} \widetilde L_{1,1} & 0 \\ 0 & I \end{bmatrix} \begin{bmatrix} \widetilde{U}_{1,1} & \widetilde{U}_{1,2} \\ \matV^*\widetilde{Q} & 0 \end{bmatrix}.
\end{equation}

Matrix $\widetilde{U}_{1,1}$ is a row perturbation and selection of rows of $U_{1,1}$. Since $U_{1,1}$ is upper triangular, we can choose $Y$ so that $\widetilde{U}_{1,1}$ is also upper triangular.
We conclude that $YA\widetilde{Q} = \widetilde{L}_{1,1} \widetilde {U}_{1,1}$.
Since $\widetilde{L}_{1,1}$ has rank $n$, the rank of $\widetilde{U}_{1,1}$ is the rank of $A$.
By construction, the rank of $\widetilde{U}_{1,1}$ is equal to the rank of $P_{1,1}$, which corresponds to the selection of rows with standard pivoting, i.e., without breakdown.
In other words, the rank of $P_{1,1}$ indeed corresponds to $k$.
\end{proof}

\noindent
An algorithm is given in Algorithm~\ref{alg:LU-2}.
The algorithm is for a rectangular matrix $A-\sigma B\in\mathbb{R}^{n\times m}$ with $m\leq n$, so that it can be used for \S\ref{sec:rec}.
We choose $\alpha=\|A-\sigma B\|$ using some norm, for example, the one or infinity norm 
or an estimate of the two-norm using a randomized method.

\begin{algorithm2e}[H]
\caption{LU factorization with rank detection}
\label{alg:LU-2}\scriptsize
\KwIn{matrix $A-\sigma B\in\mathbb{R}^{n\times m}$ of rank $k$ and $m\leq n$.}
\KwOut{border $\matV$ and $\matW$, permutation matrices $P$ and $Q$, lower triangular matrix $L$, and upper triangular matrix $U$, where $P,Q,L,U\in\mathbb{R}^{(n+m-k)\times (n+m-k)}$}

Set $\matV$ be an empty matrix, and let $U=A-\sigma B$, $n_0=n$, and
$P=Q=I_n$.

\For{$i=1,2,\ldots,m$}
{
   Select a row pivot $p_r$ in $\{i,\ldots,n_{i-1}\}$ and a column pivot $p_c$ in $\{i,\ldots,m\}$.\\
    \uIf{$|U_{p_r,p_c}|<\tau\alpha$}
    {
        Add a new row to $U$:
        let $U = \begin{bmatrix}
            U \\ \alpha e_i 
        \end{bmatrix}$,\\
        Let $P=\begin{bmatrix}
            P & 0 \\ 0 & 1
        \end{bmatrix},\,
        Q=\begin{bmatrix}
            Q & 0 \\ 0 & 1
        \end{bmatrix},\,
        L=\begin{bmatrix}
            L & 0\\ 0 & 1
        \end{bmatrix}$.\\
        Add the column $\alpha e_i$ to $\matV$.\\
        Let $n_i = n_{i-1}+1$ and $p_r=i_n$.
    }
    \Else{
     Let $n_{i} = n_{i-1}$.
    }

    Pivot rows $i$ and $p_r$ in $P$, $L$ and $U$.\\
    Pivot columns $i$ and $p_c$ in $Q$, $L$ and $U$ and rows $i$ and $p_c$ in $\matV$.\\
    Compute the $i$th column of $L_{i+1:n_i,i}=U_{i+1:n_i,i}/U_{i,i}$.\\
    Update: $U_{i+1:n_i,i+1:n} = U_{i+1:n_i,i+1:m} - U_{i+1:i_n,i} L_{i,i+1:m}$.
}

Let $\matW = \alpha P_{n+1:n_m,1:n_m-n}^*\in\mathbb{R}^{n\times (n_m-n)}$.

Let $U = \begin{bmatrix} U  & \begin{matrix} 0 \\ \alpha I\end{matrix}\end{bmatrix}$, where $I$ is the identity matrix of proper dimension.
\end{algorithm2e}
Instead of testing whether the pivot is zero, we use  the test in line~4, with $\tau$ a prescribed tolerance.
An exact zero is very unlikely to happen even when $A-\sigma B$ is singular, due to rounding errors.
In this case, it is possible that more columns of $\matV$ are added to the border than needed to detect the rank.
See~\ref{sec:rank correction}.

\subsection{Factorization for the projected problem}

For the projected problem \eqref{eq:PGEP}, the procedure is similar to the augmented LU factorization.
Instead of adding a border, we remove the pivoting row and column, when no suitable pivot is found.
Matrix $\matV_\perp$ is easily derived from $\matV$ as the identity matrix with rows removed that correspond to the zero pivots. The reasoning is similar for $\matW_\perp$.
In addition, the pivoting is applied to $\matV_\perp$ and
$\matW_\perp$, respectively, so that
\begin{equation}
\label{eq:LU PGEP}    
\matW_\perp^*(A-\sigma B)\matV_\perp=LU.
\end{equation}
\begin{theorem}
Let $\matW_\perp^*(A-\sigma B)\matV_\perp$ have full rank $k$.
Given the LU factorization of \eqref{eq:LU PGEP}, then, there are $F$ and $G$ so that
$[F, F_\perp]$ and $[G, G_\perp]$ are square and full rank matrices, and there is an LU factorization
\[
\widetilde{P} \begin{bmatrix} A-\sigma B & \matW \\ \matV^* & 0 \end{bmatrix} \widetilde{Q} = \begin{bmatrix}
    L & 0 \\ \star & \star
\end{bmatrix} \begin{bmatrix}
    U & \star \\ 0 & \star
\end{bmatrix}.
\]
\end{theorem}
\begin{proof}
By definition of $F_\perp$ and $G_\perp$, there are $n\times (n-k)$ matrices $\widetilde{F}$ and $\widetilde{G}$ so that
$[F_\perp, \widetilde{F}]$ and $[G_\perp,\widetilde{G}]$ are permutation matrices.
Then, we have
\[
\begin{bmatrix}
G_\perp^* \\ \widetilde{G}^*
\end{bmatrix}(A-\sigma B)\begin{bmatrix}
F_\perp & \widetilde{F}
\end{bmatrix} = \begin{bmatrix} L & 0 \\ L_R & I \end{bmatrix} \begin{bmatrix}
    U & U_R \\ 0 & S_R
\end{bmatrix}
\]
where $L$ and $U$ are full rank and $S_R$ is the (singular) Schur complement.

By adding the border with $F=\alpha\widetilde F$ and $G=\alpha \widetilde G$, we have
\begin{equation}\label{eq:LU AGEP-proof1}
\begin{bmatrix} G_\perp^* & 0 \\ \widetilde G^* & 0 \\ 0 & I \end{bmatrix}
\begin{bmatrix} A-\sigma B & {G} \\ {F}^* & 0 \end{bmatrix}
\begin{bmatrix} F_\perp^* & \widetilde F^* & 0 \\ 0 & 0 & I\end{bmatrix} =
\begin{bmatrix} L & 0 & 0 \\ L_R & I & 0 \\ 0 & 0 & I \end{bmatrix}
\begin{bmatrix} U & U_R & 0 \\ 0 & S_R & \alpha I \\ 0 & \alpha I & 0\end{bmatrix}.
\end{equation}
Now, we apply a row pivoting that swap the rows of the border with the rows that are discarded by the projected factorization.
To do that, we multiply on the left with the permutation matrix
\[
\begin{bmatrix} I_k & 0 & 0 \\ 0 & 0 & I_{n-k} \\ 0 & I_{n-k} & 0\end{bmatrix}.
\]
The right-hand-side of \eqref{eq:LU AGEP-proof1} becomes after permutation
\begin{eqnarray*}
\begin{bmatrix} L & 0 & 0 \\ 0 & I & 0 \\ L_R & 0 & I \end{bmatrix}
\begin{bmatrix} U & U_R & 0 \\ 0 & \alpha I & 0\\ 0 & S_R & \alpha I \end{bmatrix}
=  
\begin{bmatrix} L & 0 & 0 \\ 0 & I & 0 \\ L_R & \alpha^{-1} S_R & I \end{bmatrix}
\begin{bmatrix} U & U_R & 0 \\ 0 & \alpha I & 0\\ 0 & 0 & \alpha I \end{bmatrix},
\end{eqnarray*}
which is the desired factorization.
\end{proof}

This does no longer hold when the pivoting sequence is different, which can happen, since pivot elements for the factorization for \eqref{eq:AGEP} can be chosen among the discarded rows $S_R$.
The theorem does not make any assumption on the discarded pivots, i.e., pivots do not have to be zero.
Also, no assumption is made on the rank of $S_R$.

By construction, the LU factorizations for \eqref{eq:PGEP} and \eqref{eq:AGEP} proceed in the same way and produce the same numbers in the LU factors.
When a pivot is discarded, the factorization of \eqref{eq:PGEP} removes the pivot row and columns, where the factorization of \eqref{eq:AGEP} swaps this pivot row with a row in $\matV^*$. This introduces zeroes in the pivot row except for $\alpha$ as a pivot.
Because of the zeroes, the Schur complement does not depend on this pivot.
As a result, the factorization for \eqref{eq:AGEP} can be derived from the factorization for \eqref{eq:PGEP}.

\subsection{Rank correction}\label{sec:rank correction}
For the shift-and-invert transformation, it is expected that the condition number of the shifted matrix is not too high to prevent the Arnoldi method to introduce large errors in the
recurrence relation. This is usually achieved by picking the shift in between (clusters of) eigenvalues.
This means, that it is expected that the normal rank is relatively easy to uncover. If this is not so, the Arnoldi method may not be reliable.

For many problems, the normal rank is known from the structure of the problem. This information is not so easy to use in the LU factorizations that we presented.
One could choose the tolerance $\tau$ by trial and error until the uncovered rank corresponds to the normal rank.
In this section, we propose another idea.
Suppose that the detected normal rank $\widetilde k$ is smaller than the true rank: this happens when the drop tolerance is too large; see \S\ref{sec:update} for an example.

Suppose that $\widetilde\matV$ and $\widetilde\matW$ have been formed (or the orthogonal complements) and that their column sizes are larger than expected.
Let $\widetilde\matV,\widetilde\matW\in\mathbb{R}^{n\times (n-\widetilde k)}$ with $\widetilde k<k$.
Now define random matrices $Z,Y\in\mathbb{R}^{(n-\widetilde{k})\times (n-k)}$ with orthogonal complements $Z_\perp,Y_\perp\in\mathbb{C}^{(n-\widetilde{k})\times(k-\widetilde{k})}$, respectively. (In practice, we use Matlab's \texttt{randn}.)
Define $\matV=\widetilde\matV Z$ and $\matW=\widetilde\matW Y$, and the orthogonal complements
\begin{eqnarray*}
    \matV_\perp & = & \begin{bmatrix}
        \widetilde\matV_\perp & \widetilde\matV Z_\perp
    \end{bmatrix} \\
\matW_\perp & = & \begin{bmatrix}
        \widetilde\matW_\perp & \widetilde\matW Y_\perp
    \end{bmatrix}.
\end{eqnarray*}

Assume for ease of notation that $\sigma=0$. For \eqref{eq:PGEP}, the new linear system becomes
\begin{equation}\label{eq:PGEP rank correction}
\begin{bmatrix}
    \widetilde\matW_\perp^*A\widetilde\matV_\perp & \widetilde\matW_\perp^*A\widetilde\matV Z_\perp \\
    Y_\perp^* \widetilde\matW^*A\widetilde\matV_\perp & Y_\perp^* \widetilde\matW^*A\widetilde\matV Z_\perp \\
\end{bmatrix} \begin{pmatrix} x\\y\end{pmatrix} = \begin{pmatrix}
    f \\ g
\end{pmatrix}.
\end{equation}
Since we expect $\widetilde{k}\simeq k$, $y$ is low dimensional and can cheaply be solved, using
the given LU factorization $LU=\widetilde\matW_\perp^*A\widetilde\matV_\perp$.
Assuming that \eqref{eq:PGEP rank correction} is a regular linear system, its solution is computed from the block LU factorization
\[
\begin{bmatrix}
LU & 0 \\ Y_\perp^* \widetilde\matW^*A\widetilde\matV_\perp & S
\end{bmatrix}
\begin{bmatrix}
I & (LU)^{-1} \widetilde\matW_\perp^*A\widetilde\matV Z_\perp \\ 0 & I
\end{bmatrix} \begin{pmatrix} x\\y\end{pmatrix} = \begin{pmatrix} f \\ g \end{pmatrix},
\]
with Schur complement $S=Y_\perp^* \widetilde\matW^*A\widetilde\matV Z_\perp-Y_\perp^* \widetilde\matW^*A\widetilde\matV_\perp(LU)^{-1} \widetilde\matW_\perp^*A\widetilde\matV Z_\perp$.
Since $S$ is supposed small scale, the additional cost is not high.
For a linear solver, $S$ is precomputed and factorized as a dense matrix.

The LU factorization for \eqref{eq:AGEP} can be used for solving
\begin{equation}\label{eq:AGEP-rank corrected}
\begin{bmatrix}
    A & \matW & \widetilde{\matW}Y_\perp \\
    \matV^* & 0 & 0 \\
    Z_\perp^* \widetilde\matV^* & 0 & 0
\end{bmatrix}\begin{pmatrix}
    x_1 \\ x_2 \\ y
\end{pmatrix} = \begin{pmatrix} f \\ 0 \\ 0 \end{pmatrix},
\end{equation}
using an orthogonal transformation of right-hand side and solution.
Since we only want to solve a linear system with a submatrix, we could instead solve the system
\begin{equation}\label{eq:AGEP-rank corrected final}
\begin{bmatrix}
    A & \matW & 0 \\
    \matV^* & 0 & 0 \\
    Z_\perp^* \widetilde\matV^* & 0 & I
\end{bmatrix}\begin{pmatrix}
    x_1 \\ x_2 \\ y
\end{pmatrix} = \begin{pmatrix} f \\ 0 \\ 0 \end{pmatrix}.
\end{equation}
Assuming that $\widetilde{k}\simeq k$, this linear system is a (low) rank $k-\widetilde k$ change of \eqref{eq:AGEP-rank corrected} and \eqref{eq:AGEP-rank corrected final} can be solved using the Sherman-Morrison-Woodbury formula, with a small additional cost. We do not discuss this in further detail.

\section{Rectangular pencils}\label{sec:rec}

In this section, we employ the proposed method for rectangular eigenvalue problem \eqref{eq:GEP}, where $A,B\in\mathbb{R}^{n\times m}$ and $n\neq m$. Examples arise from rectangular multiparameter eigenvalue problems \cite{Shapiro2009} \cite{hochstenbach2024solvingsingulargeneralizedeigenvalue}.
Similarly to the square case, $\lambda\in\mathbb{C}$ is called an eigenvalue of the problem \eqref{eq:GEP} if $\mathrm{rank}(A-\lambda B)$ is less than the normal rank $\max_{\sigma\in\mathbb{C}}\mathrm{rank}(A-\sigma B)$.
Let us for the ease of presentation, assume $m<n$.

First, let the normal rank $k=m$.
In this case, \eqref{eq:AGEP} becomes
\[
\begin{bmatrix}\label{eq:RectGEP}
    A - \lambda B & \matW
\end{bmatrix}.
\]
Similar to the square case, after step $m$ of the LU factorization, we choose
\[
\matW=P_m^* \begin{bmatrix} 0 \\ \alpha I_k \end{bmatrix}.
\]
Partial pivoting should be sufficiently reliable for problems with $n>m$ and $k=m$, since this method is usually performed for sparse pencils of full rank.
The pencil \eqref{eq:PGEP} becomes
\[
\matW_\perp^*(A-\lambda B).
\]
The latter corresponds to a selection of rows of $A-\lambda B$.
Interestingly, we can use existing sparse solvers with row pivoting for the rectangular matrix.
The factorization obtained satisfies
$P(A-\sigma B)Q = LU$.
Selecting $\widetilde P$ and $\widetilde L$ as the first $m=k$ rows of $P$ and $L$, respectively, we have that $\widetilde P(A-\lambda B)Q$ is a square and regular eigenvalue problem and $\widetilde P(A-\sigma B)Q = \widetilde LU$.
In the numerical example in \S\ref{sec:nonlinear}, we used the built-in Octave sparse solver for \eqref{eq:PGEP}.

If $A-\sigma B$ is not full rank, we use both $\matV$ and $\matW$, this time of different dimensions to obtain a square pencil.
We perform $m$ LU factorization steps, as before.
If, after step $m$, the number of rows of $A-\sigma B$ expanded with $\matV^*$ is larger than the number of columns of $A-\sigma B$, then we add $\matW$ to make the augmented matrix square.




\section{Numerical examples}\label{sec:examples}

The numerical results were obtained using Octave version~9.2.0 on a Macbook Pro with 24 GB RAM and an Apple M3 chip running MacOS Sequoia~15.7.3.
The software is available through \url{https://gitlab.kuleuven.be/numa/public/singular_arnoldi.git}.
The LU factorization algorithms are naive implementations from the algorithms in the paper. No optimizations for performance or fill-in reducing numerical pivoting was performed \cite{duffpralet2005}.
Condition numbers were computed with Octave's command $\texttt{cond}$ or $\texttt{condest}$ for matrices larger than 1000.
The Arnoldi method used classical Gram-Schmidt orthogonalization with reorthogonalization.
Implicit restarts are computed using an explicit QR step.

We used partial row pivoting, rook pivoting \cite{POOLE2000353} and complete pivoting in our experiments. The number of row or column searches per elimination step using rook pivoting was limited to five.
Also, to further reduce the number of searches and column swaps, rook pivoting was stopped after the first column search if a suitable pivot was found, i.e., larger than the tolerance.

\subsection{Updating finite element model}\label{sec:update}

The equation of motion of a damped linear dynamic system can be expressed as
$$M\ddot{\bf q}(t)+C\dot{\bf q}(t)+K{\bf q}(t)=0,$$
where $M,C,K\in\mathbb{R}^{n\times n}$ are inertia, damping, and stiffness matrices, respectively. Let ${\bf q}(t)=e^{\omega t}{\bf x}$, it can be transformed into a quadratic eigenvalue problem
\[
(\omega^2 M+\omega C+K){\bf x}=0,
\]
where $\omega$ and ${\bf x}$ are eigenvalue and eigenvector, respectively. 
The example is a finite element model of a simple truss structure. The model, shown in Figure~\ref{figure1}, consists of 40 elements, 18 nodes, and 33 degrees of freedom (DOFs) \cite{wang2023}.
The elements are made of steel with mass density of $8000\,\mathrm{kg/m^3}$, Young's modulus of $200\,\mathrm{GPa}$, and cross section area of $20\,\mathrm{cm^2}$. A nominal finite element model $(M,C,K)$ can be constructed, where $C=a_0M+a_1K$ is the Rayleigh damping matrix with $0.5\%$ damping ratio \cite{Shadan2015}. In addition, $10\%$ loss of stiffness is applied to imitate the degradation of the materials for the 3rd, 5th, and 7th elements in Figure~\ref{elements1}. Consequently, another finite element model $(M,C^*,K^*)$ is constructed based on the degraded materials, where $C^*=a_0^*M+a_1^*K^*$.

\begin{figure}
\centering
\begin{tabular}{c}
\includegraphics[scale=0.4]{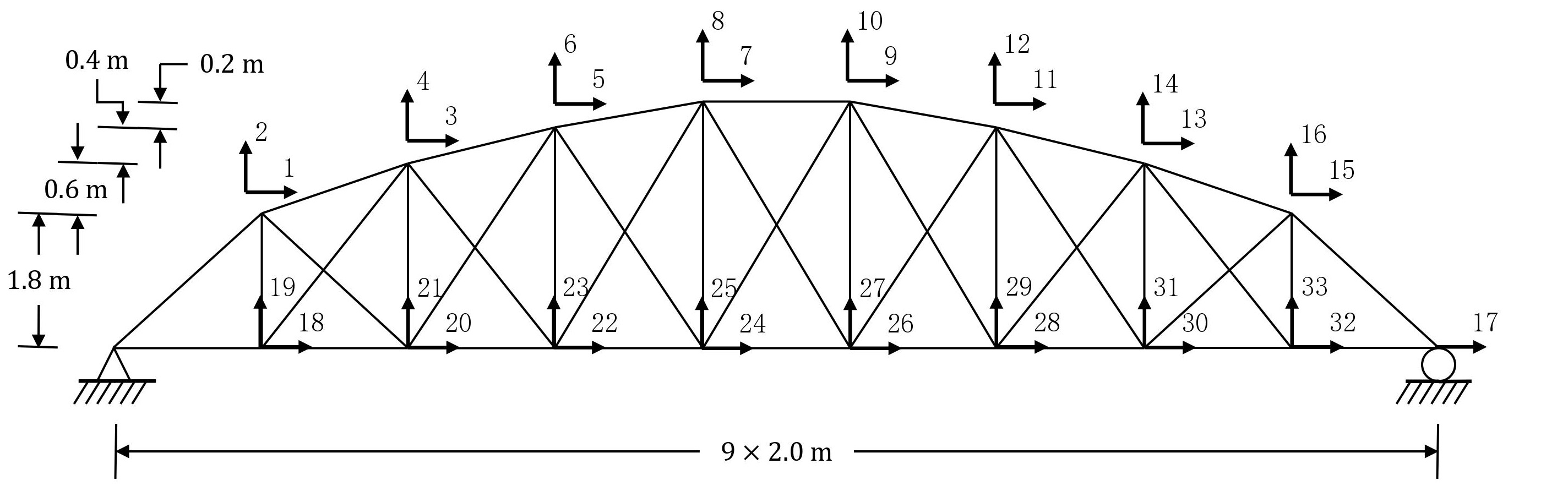}
     \\
\includegraphics[scale=0.4]{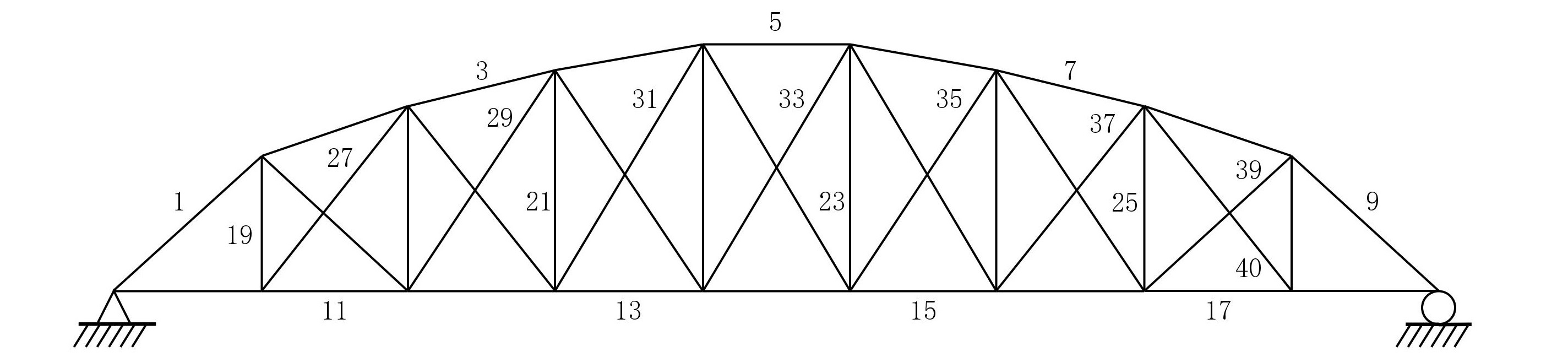} \\
    33 degrees of freedom
\end{tabular}
\caption{Truss model}
\label{elements1}\label{figure1}       
\end{figure}
\begin{figure}
    \centering
    \begin{tabular}{cc}\includegraphics[width=0.4\textwidth]{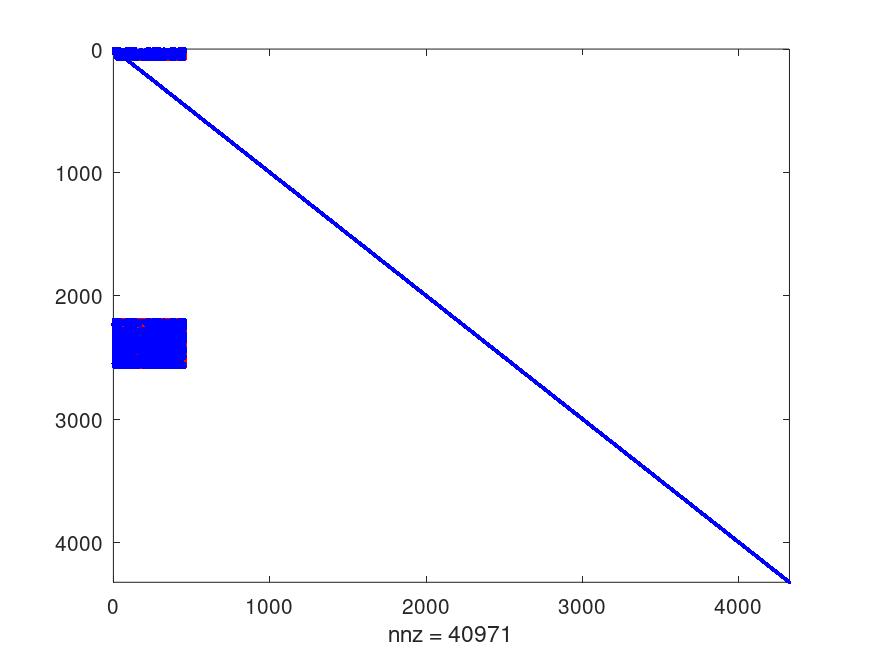} &
    \includegraphics[width=0.4\textwidth]{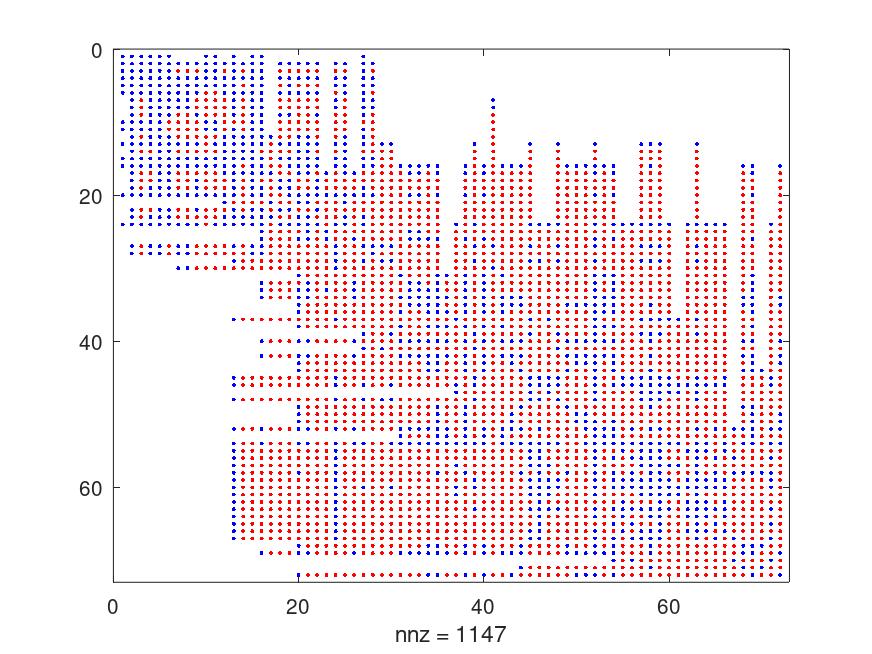} \\
    \eqref{eq:AGEP} & \eqref{eq:PGEP}
    \end{tabular}
    \caption{Sparsity patterns of pencil matrices (blue) and the fill-in by the L and U factors obtained with rook pivoting (red). Pencil matrices are permuted following the pivoting matrices to match the pattern of $L$ and $U$. The fill-in reducing ordering \texttt{colamd} was used before the LU factorization.
    If we zoom in the first 72 rows and columns of the left figure, we obtain the right figure.}
\end{figure}

The updating of the model to measured eigenvalues leads to a multiparameter eigenvalue problem \cite{COTTIN2001}, which can be transformed into a singular generalized eigenvalue problem \cite{atkinson1972}. 
We  update the stiffness of the 3rd, 5th, and 7th elements in the nominal finite element model $(M,C,K)$ using three eigenvalues measured in the experimental model $(M,C^*,K^*)$. 

We noticed that the rank of $B$ is six, so, there are at most six finite eigenvalues for the singular pencil, and only six finite eigenvalues for the regularized pencils.
Therefore, six iterations of Arnoldi are sufficient to obtain all eigenvalues, provided the starting vector lies in the invariant subspace.
This is achieved by choosing the starting vector as $((A-\sigma B^{-1}B)^2 \widetilde{v}$ with $\widetilde{v}$ randomly chosen.

First, we ran a smaller problem with 13 DOFS instead of 33, which leads to a pencil of order $n=2197$ and nrank $k=72$. We choose shift $\sigma=200$ and found the eigenvalues with residual norms of the regularized and original pencils of the order $10^{-15}$ for all methods using \eqref{eq:AGEP}, and \eqref{eq:PGEP}.
Note that without implicit restarts with zero shifts, even for \eqref{eq:PGEP}, the residual was only of the order $10^{-12}$.

As a comparison, we tried a randomized approach by forming \eqref{eq:PGEP} with randomly chosen orthonormalized $\matV_{\perp}$ and $\matW_\perp$. Note that it is important that $\matV_{\perp}$ and $\matW_\perp$ are orthonormalized, otherwise results are not good.
We found that, with our approach the condition number of $\matW_\perp^*(A-\sigma B)\matV_\perp$ was $2.1\cdot10^2$ and for the randomized approach it was
$1.1\cdot10^6$, showing at least one advantage of a deterministic way in determining pivots and projectors.
The residual norm for the right and left Ritz vectors associated with Ritz value $-130.2$ was $4.5\cdot10^{-15}$ and $5.0\cdot10^{-15}$, respectively, for our approach and $5.2\cdot10^{-12}$ and $5.0\cdot10^{-11}$ for the randomized approach, which shows a second advantage of ours.


We now illustrate rank correction for \eqref{eq:PGEP} and \eqref{eq:AGEP}.
We ran the PLU factorization with $\tau=10^{-1}$ and found that the normal rank was wrongly determined as $68$ instead of $72$ for both pencils.
We then reran the Arnoldi method with the rank correction from \S\ref{sec:rank correction}.
Tables~\ref{tab:rank correction PGEP} and~\ref{tab:rank correction AGEP} summarize the results.
\begin{table}
    \caption{Real parts of first three Ritz values for \eqref{eq:PGEP} with and without rank correction for $\tau=10^{-1}$, $\widetilde k=68$ and $k=72$.
$\rho_R=\|Ax-\lambda Bx\|_2/\|Ax\|_2$, $\rho_L=\|y^* A-\lambda y^* B\|_2/\|y^*A\|_2$.}\label{tab:rank correction PGEP}
    \begin{center}\footnotesize
    \begin{tabular}{lll|lll}
    \multicolumn{3}{c|}{Without rank correction} & \multicolumn{3}{c}{With rank correction} \\
    \multicolumn{1}{c}{$\lambda$} & \multicolumn{1}{c}{$\rho_R$} & \multicolumn{1}{c|}{$\rho_L$} & \multicolumn{1}{c}{$\lambda$} & \multicolumn{1}{c}{$\rho_R$} & \multicolumn{1}{c}{$\rho_L$} \\\hline\rule{0pt}{2.3ex}%
    $-176.67$ &  $2.3\cdot10^{-1}$ & $1.7\cdot10^{-1}$ & $-130.21$ & $2.2\cdot10^{-13}$ & $2.9\cdot10^{-13}$ \\
    $-187.94$ & $1.4\cdot10^{-1}$ & $6.6\cdot10^{-2}$ & $-180.00$ & $6.1\cdot10^{-15}$ & $9.9\cdot10^{-14}$ \\
    $-214.2$ & $1.50\cdot10^{-1}$ & $8.2\cdot10^{-2}$ & $-219.39$ & $6.2\cdot10^{-14}$ & $1.8\cdot10^{-14}$ \\
    \end{tabular}
    \end{center}
\end{table}
\begin{table}
    \caption{Real parts of first three Ritz values for \eqref{eq:AGEP} with and without rank correction for $\tau=10^{-1}$, $\widetilde k=68$ and $k=72$.
$\rho_R=\|Ax-\lambda Bx\|_2/\|Ax\|_2$, $\rho_L=\|y^* A-\lambda y^* B\|_2/\|y^*A\|_2$.
}\label{tab:rank correction AGEP}
    \begin{center}\footnotesize
    \begin{tabular}{lll|lll}
    \multicolumn{3}{c|}{Without rank correction} & \multicolumn{3}{c}{With rank correction} \\
    \multicolumn{1}{c}{$\lambda$} & \multicolumn{1|}{c}{$\rho_R$} & \multicolumn{1}{c|}{$\rho_L$} & \multicolumn{1}{c}{$\lambda$} & \multicolumn{1}{c}{$\rho_R$} & \multicolumn{1}{c}{$\rho_L$} \\\hline\rule{0pt}{2.3ex}%
    $-176.67$ &  $2.3\cdot10^{-1}$ & $1.7\cdot10^{-1}$ & $-130.21$ & $1.6\cdot10^{-8}$ & $2.1\cdot10^{-8}$ \\
    $-187.94$ & $1.4\cdot10^{-1}$ & $6.6\cdot10^{-2}$ & $-180.00$ & $5.2\cdot10^{-9}$ & $5.8\cdot10^{-9}$ \\
    $-214.2$ & $1.50\cdot10^{-1}$ & $8.2\cdot10^{-2}$ & $-219.39$ & $3.6\cdot10^{-9}$ & $1.6\cdot10^{-9}$ \\
    \end{tabular}
    \end{center}
\end{table}


We finally ran the FEM model with 33 DOFS.
This leads to a pencil of dimension $35937$ and nrank $k=192$ instead of $72$.
This implies that the augmented problem has dimension $35937\times 2- 192=71682$ and the projected problem dimension 192.
We formed and solved \eqref{eq:PGEP} using RLU with tolerance $10^{-10}$.
We found the correct rank. The computed Ritz values and residual norms can be found in Table~\ref{tab:fem large}.
The condition number of $G_\perp^*(A-\sigma B)F_\perp$ was $3.6\cdot10^3$ where it was $3.0\cdot10^6$ for the randomized approach.
The randomized approach required the construction and orthogonalization of two random $35937\times 192$ matrices, and the projection of the pencil on those.
\begin{table}
    \scriptsize
    \begin{center}
    \begin{tabular}{c|cccc}
& \multicolumn{2}{c}{RLU PGEP} & \multicolumn{2}{c}{randomized PGEP} \\
Ritz value real part & $\rho_R$ & $\rho_L$ & $\rho_R$ & $\rho_L$ \\\hline\rule{0pt}{2.3ex}
    $42.214489$ &  $5.1\cdot10^{-14}$ & $7.5\cdot10^{-14}$ & $1.4\cdot10^{-10}$ & $2.4\cdot10^{-10}$ \\
    $-176.586244$ & $4.3\cdot10^{-14}$ & $4.2\cdot10^{-14}$ & $1.7\cdot10^{-10}$ & $1.5\cdot10^{-10}$ \\
    $-299.251379$ & $1.8\cdot10^{-14}$ & $4.2\cdot10^{-14}$ & $5.4\cdot10^{-12}$ & $1.5\cdot10^{-12}$ \\
    $-310.079052$ & $4.8\cdot10^{-15}$ & $5.3\cdot10^{-15}$ & $1.9\cdot10^{-12}$ & $1.9\cdot10^{-12}$ \\
    $-434.984681$ & $4.5\cdot10^{-15}$ & $1.6\cdot10^{-16}$ & $1.7\cdot10^{-12}$ & $3.5\cdot10^{-12}$ \\
    $-435.008059$ & $5.2\cdot10^{-15}$ & $2.2\cdot10^{-15}$ & $2.0\cdot10^{-12}$ & $3.5\cdot10^{-12}$
     \end{tabular}
        \caption{Comparison for a larger FEM model, with $\rho_R=\|Ax-\lambda Bx\|_2/\|Ax\|_2$, $\rho_L=\|y^* A-\lambda y^* B\|_2/\|y^*A\|_2$}\label{tab:fem large}
    \end{center}
\end{table}

\subsection{Detection of double eigenvalue}\label{sec:double}
We have the parametric matrix $A+\mu B\in\mathbb{C}^{m\times m}$ and we are interested in the values of $\mu$ for which $A+\mu B$ has a double eigenvalue \cite{Elias_DoubleEig}.
Here we set the regularization parameter from \cite{Elias_DoubleEig} to zero and obtain a singular multiparameter eigenvalue problem:
\begin{eqnarray*}
(A + \mu B) x = \lambda x \\
(A + \mu B) y = \lambda y.
\end{eqnarray*}
The elimination of $\lambda$ leads to the singular eigenvalue problem
\[
(A\otimes B - B\otimes A) x = \lambda (I\otimes B - B\otimes I) x,
\]
of dimension $n=m^2$,
with $k=n^2-n$.
Matrices $A$ and $B$ are here chosen as
\[
    A = L_1 \otimes I \quad,\quad
    B = I\otimes L_1 
\]
with $L_1$ the finite difference matrix for the 1D Laplacian on $[0,1]$. The problem represents the discretization of a 2D Laplacian with parameter $\mu$ in the $y$ direction.
We chose $n=100$. For $n=100$, \eqref{eq:AGEP} has dimension $10100$ and \eqref{eq:PGEP} has dimension $9900$. 
We have run two-sided Arnoldi with $m=20$ for \eqref{eq:AGEP} with PLU and RLU factorization and \eqref{eq:PGEP} with PLU, RLU factorization and a randomized selection of $\matV_\perp$ and $\matW_\perp$.
For the factorization $\tau=10^{-10}$ was used. In all cases, the nrank was correctly determined by the LU factorization.
The results are shown in Table~\ref{tab:double}.
These are in line with the results obtained for the previous example.
Note that the randomized \eqref{eq:PGEP} is a dense pencil of dimension $9900$ where the RLU pencil is sparse.

\begin{table}
    \scriptsize
    \begin{center}
    \begin{tabular}{c|cccccc}
      & \multicolumn{2}{c}{PLU PGEP} & \multicolumn{2}{c}{RLU PGEP} & \multicolumn{2}{c}{randomized PGEP} \\    
    Ritz value & $\rho_R$ & $\rho_L$ & $\rho_R$ & $\rho_L$ &  $\rho_R$ & $\rho_L$\\\hline\rule{0pt}{2.3ex}%
$0.66249$ & $9.\cdot10^{-12}$ & $1.0\cdot10^{-11}$ & $3.5\cdot10^{-11}$ & $4.5\cdot10^{-11}$ & $1.4\cdot10^{-7}$ & $5.1\cdot10^{-7}$ \\
$0.66030$ & $7.2\cdot10^{-12}$ & $2.1\cdot10^{-11}$ & $2.6\cdot10^{-11}$ & $1.2\cdot10^{-10}$ & $4.2\cdot10^{-9}$ &  $7.5\cdot10^{-8}$ \\
$\kappa(LU)$ & \multicolumn{2}{c}{$9.1\cdot10^5$} & \multicolumn{2}{c}{$5.4\cdot10^5$} & \multicolumn{2}{c}{$7.9\cdot10^8$} 
\\\hline\rule{0pt}{2.3ex}%
      & \multicolumn{2}{c}{PLU AGEP} & \multicolumn{2}{c}{RLU AGEP} & \multicolumn{2}{c}{randomized AGEP} \\    
    Ritz value & $\rho_R$ & $\rho_L$ & $\rho_R$ & $\rho_L$ \\\hline\rule{0pt}{2.3ex}%
$0.66249$ & $1.8\cdot10^{-10}$ & $4.1\cdot10^{-11}$ & $7.5\cdot10^{-12}$ & $7.9\cdot10^{-12}$ &  $4.2\cdot10^{-9}$ & $6.0\cdot10^{-8}$ \\
$0.66030$ & $1.9\cdot10^{-10}$ & $9.5\cdot10^{-11}$ & $1.3\cdot10^{-11}$ & $8.4\cdot10^{-12}$ & $1.1\cdot10^{-8}$ & $6.9\cdot10^{-8}$ \\
$\kappa(LU)$ & \multicolumn{2}{c}{$9.1\cdot10^5$} & \multicolumn{2}{c}{$4.7\cdot10^5$} & \multicolumn{2}{c}{$1.6\cdot10^9$} \\
    \end{tabular}
        \caption{Comparison for detecting a double eigenvalue with $\rho_R=\|Ax-\lambda Bx\|_2/\|Ax\|_2$, $\rho_L=\|y^* A-\lambda y^* B\|_2/\|y^*A\|_2$}\label{tab:double}
    \end{center}
\end{table}

\if 0
\subsection{Singular quadratic eigenvalue problem}
Let
\begin{equation}\label{eq:QEP}
\lambda^2 A_2+\lambda A_1+A_0=0
\end{equation}
be singular and have a singular companion linearization
\begin{equation*}
A-\lambda B=\begin{bmatrix}
    A_1+\lambda A_2 & A_0\\
    I_n & -\lambda I_n
    \end{bmatrix}.
\end{equation*}
We define, for $i=0,1,2$,
\begin{equation}
A_i=\begin{bmatrix}
    \begin{matrix}
    \beta_i \\
    {\bf 0}
    \end{matrix}
    & R_i & 0
    \end{bmatrix},
\end{equation}
where $\beta_i$'s are random scalars, and $R_i$'s arbitrary rectangular matrices. This quadratic eigenvalue problem has only two eigenvalues determined by the equation $\lambda^2\beta_2+\lambda\beta_1+\beta_0=0$. The corresponding companion linearization only has the same two finite regular eigenvalues as \eqref{eq:QEP}.
Note that we do not exploit the structure of the linearization.

In this example, we choose $\beta_0=-1,\beta_1=1,\beta_2=0$ and set the size of $A_i$ and $R_i$ to $700\times 700$ and $1400\times 498$, respectively. Matrices $R_i$ are computed using \texttt{sprandn(498,500,0.05)}.
The only true eigenvalue is $1$. 
We applied 20 iterations of the two sided shift-and-invert Arnoldi method to the regularized pencils, with shift $1.1$ and randomly chosen starting vector.
Two steps of implicit restart are used. 
We used PLU, RLU and CLU with $\tau=10^{-12}$
We compared with randomly choosing $F_\perp$ and $G_\perp$.
The condition numbers and residual norms, shown in Tables~\ref{tab:qep-cond-agep} and~\ref{tab:qep-cond-pgep}, 
are comparable for all methods.
\begin{table}
    \centering
    \begin{tabular}{c|c|c|c|c}
        method & PLU & RLU & CLU \\\hline
        $\kappa$ & $2.6\cdot10^7$ & $1.5\cdot10^7$ & $1.5\cdot10^7$  \\
        $\|Ax-\lambda Bx\|_2/\|Ax\|_2$ & $\cdot10^{-13}$ & $2.9\cdot10^{-14}$ & $2.0\cdot10^{-14}$  \\
        $\|y^* A-\lambda y^* B\|_2/\|y^* A\|_2$ & $3.8\cdot10^{-13}$ & $2.7\cdot10^{-14}$ & $3.8\cdot10^{-15}$ \\
    \end{tabular}
    \caption{Residual norms for $\lambda=1$ and condition numbers $\kappa$ for \eqref{eq:AGEP} variants}
    \label{tab:qep-cond-agep}
\end{table}
\begin{table}
    \centering
    \begin{tabular}{c|c|c|c|c}
        method & PLU & RLU & CLU & randomized \\\hline
        $\kappa$ & $2.5\cdot10^7$ & $1.5\cdot10^7$ & $1.5\cdot10^7$ & $2.1\cdot10^{7}$ \\
        $\|Ax-\lambda Bx\|_2/\|Ax\|_2$ & $1.1\cdot10^{-13}$ & $1.4\cdot10^{-13}$ & $2.7\cdot10^{-14}$ & $1.9\cdot10^{-13}$ \\
        $\|y^* A-\lambda y^* B\|_2/\|y^* A\|_2$ & $2.0\cdot10^{-13}$ & $7.2\cdot10^{-14}$ & $5.7\cdot10^{-15}$ & $3.8\cdot10^{-12}$ \\
    \end{tabular}
    \caption{Residual norms for $\lambda=1$ and condition numbers $\kappa$ for \eqref{eq:PGEP} variants}
    \label{tab:qep-cond-pgep}
\end{table}
\fi

\subsection{Rectangular eigenvalue problem from nonlinear eigenvalue problem}\label{sec:nonlinear}
Consider the nonlinear eigenvalue problem
\[
A x - \lambda B x + C \frac{r^T x}{s^T x} x = 0
\]
with $A,B,C\in\mathbb{R}^{\ell\times \ell}$ and $r,s\in\mathbb{R}^\ell$ and $x0\neq x\in\mathbb{C}^\ell$ an eigenvector associated with eigenvalue $\lambda\in\mathbb{C}$.
The introduction of the parameter $\mu$ leads to the linear $(\ell+1)\times \ell$ rectangular two-parameter eigenvalue problem
\[
\begin{bmatrix}
    A \\ r^T 
\end{bmatrix} - \lambda \begin{bmatrix}
    B \\ 0
\end{bmatrix} x + \mu \begin{bmatrix}
    C \\ s^T
\end{bmatrix} x.
\]
With $n=(\ell+1)^2$ and $k=\ell^2$, 
the elimination of $\mu$ leads to the rectangular $n\times k$ problem
\begin{equation}\label{eq:pde-right}
\left(\begin{bmatrix}
    A \\ r^T 
\end{bmatrix}\otimes \begin{bmatrix}
    C \\ s^T
\end{bmatrix} -\begin{bmatrix}
    C \\ s^T
\end{bmatrix}\otimes \begin{bmatrix}
    A \\ r^T 
\end{bmatrix}\right)(x\otimes x) = \lambda \left(\begin{bmatrix}
    B \\ r^T 
\end{bmatrix}\otimes \begin{bmatrix}
    C \\ s^T
\end{bmatrix} -\begin{bmatrix}
    C \\ s^T
\end{bmatrix}\otimes \begin{bmatrix}
    B \\ r^T 
\end{bmatrix}\right) (x\otimes x).
\end{equation}
We consider Example~3 from \cite{Claes2022} with $\gamma=2$, and $\ell=100$.
Since the eigenvector has the form $x\otimes x$, the column dimension of the pencil can be reduced from $\ell^2$ to $\ell(\ell+1)/2$ by eliminating unknowns that appear twice in the product $x\otimes x$. This leads to a full rank rectangular $10201\times 5050$ problem.

Since the rank is full, we used PLU with tolerance $\tau=0$.
We applied the single sided Arnoldi method from Theorem~\ref{th:arnoldi-augmented-projected} with subspace dimension $m=20$.
We used shift $\sigma=1$.
For \eqref{eq:AGEP}, we used one implicit restart with zero shift.
The innerproduct from Theorem~\ref{th:arnoldi-augmented-projected} was used.
The Ritz pairs were computed from the Hessenberg matrix of the Arnoldi method.
The residual norms are displayed for the original pencil \eqref{eq:GEP} in Table~\ref{tab:rect}.
Next, instead of using our own LU implementation, we used Octave's built-in LU factorization for sparse matrices and cut the $L$ and $U$ matrices to make them square as discussed in \S\ref{sec:rec}.


We also compared with a randomized method \cite{Hochstenbach2023a} by projecting with an orthonormalized randomly chosen $\matW_\perp$ using Octave's \verb!randn!. Note that computing this projection is not cheap for a large sparse matrix. The current problem at hand is still small size.
This led to a dense matrix, which, for this problems requires $204$MB of storage while the sparse LU requires $98$MB.
The sparse approach required $0.05$ seconds for the factorization, where the randomized approach for computing the projection and factorizing the dense matrix required $2.9$ seconds.
Note that the results of the randomized approach were more accurate than with the linear solver from Octave.

\begin{table}
\caption{Results for the rectangular eigenvalue problem}\label{tab:rect}    
\begin{center}\footnotesize
\begin{tabular}{c|c|c|c|c}
method & AGEP & PGEP & PGEP (octave) & PGEP (randomized) \\\hline\rule{0pt}{2.3ex}
$\kappa(LU)$ & $1.2\cdot10^{13}$ & $6.1\cdot10^{12}$ & $5.5\cdot10^{12}$ & $3.0\cdot10^{12}$ \\
Ritz value  & $0.364246$ & $0.364246$ & $0.364246$ & $0.364246$ \\
$\|Ax-\lambda Bx\|/\|Ax\|$ & $5.9\cdot10^{-8}$ & $1.2\cdot10^{-8}$ & $1.9\cdot10^{-7}$ & $1.9\cdot10^{-9}$ \\
\end{tabular}
\end{center}
\end{table}

\section{Conclusions and outlook}

In this paper, we applied the two-sided shift-and-invert Arnoldi method to regularizations of singular linear pencils, based on the pivoting sequence of a sparse LU factorization.
Standard pivoting approaches (PLU, RLU, CLU) appear to work quite nicely, probably, because the gap of the small and large singular values is not small.
For general problems, only solvers with global maximum volume pivoting are deemed reliable. This is an invitation to further research on practical pivoting strategies for sparse matrices \cite{duffpralet2005}.
Rank correction is introduced.

Since we did not experience an advantage of \eqref{eq:AGEP}, we suggest to use \eqref{eq:PGEP}. 
The randomized approach \cite{Hochstenbach2019} led to a dense matrix and a higher storage cost and computational cost, especially for large scale problems.
We found the approach with orthogonal random $F_\perp$ and $G_\perp$ more accurate than our methods for the NEPv problem, but less accurate for the other problems.

For rectangular problems of full rank, standard sparse factorizations (with partial pivoting) can be used.
Timing comparisons with sparse LU from Octave show a major advantage of sparse methods.

All examples were two-parameter eigenvalue problems. Extensions of the current methods exploiting the Kronecker structure of the matrices is future work.

\section*{Acknowledgement}
The work by Karl Meerbergen is suppported by FWO Research Foundation - Flanders Grant G027624N.
The authors thank Victor Janssens for the formulation of the problem from \S\ref{sec:nonlinear}, and Michiel Hochstenbach and Bor Plestenjak for interesting discussions and suggestions.


\begin{thebibliography}{10}

\bibitem{atkinson1972}
{\sc F.~V. Atkinson and A.~Mingarelli}, {\em Multiparameter Eigenvalue
  Problems}, vol.~1, Academic Press New York, 1972.

\bibitem{byers2007}
{\sc R.~Byers, V.~Mehrmann, and H.~Xu}, {\em A structured staircase algorithm
  for skew-symmetric/symmetric pencils}, ETNA, 26 (2007), pp.~1--33.

\bibitem{Claes2022}
{\sc R.~Claes, E.~Jarlebring, K.~Meerbergen, and P.~Upadhyaya}, {\em
  Linearizable eigenvector nonlinearities}, SIAM J. Matrix Anal. Appl., 43
  (2022), pp.~764--786.

\bibitem{COTTIN2001}
{\sc N.~COTTIN}, {\em Dynamic model updating—a multiparameter eigenvalue
  problem}, Mechanical Systems and Signal Processing, 15 (2001), pp.~649--665.

\bibitem{duffpralet2005}
{\sc I.~S. Duff and S.~Pralet}, {\em Strategies for scaling and pivoting for
  sparse symmetric indefinite problems}, SIAM J. Matrix Anal. Appl., 27 (2005),
  pp.~313--340.

\bibitem{gantmacher1959}
{\sc F.~Gantmacher}, {\em The theory of matrices}, Chelsea Publishing Company,
  (1959).

\bibitem{Hochstenbach2019}
{\sc M.~E. Hochstenbach, C.~Mehl, and B.~Plestenjak}, {\em Solving singular
  generalized eigenvalue problems by a rank-completing perturbation}, SIAM J.
  Matrix Anal. Appl., 40 (2019), pp.~1022--1046.

\bibitem{Hochstenbach2023a}
\leavevmode\vrule height 2pt depth -1.6pt width 23pt, {\em Solving singular
  generalized eigenvalue problems. {P}art {II}: Projection and augmentation},
  SIAM J. Matrix Anal. Appl., 44 (2023), pp.~1589--1618.

\bibitem{hochstenbach2024solvingsingulargeneralizedeigenvalue}
\leavevmode\vrule height 2pt depth -1.6pt width 23pt, {\em Solving singular
  generalized eigenvalue problems. part {III}: Structure preservation}, SIAM
  Journal on Matrix Analysis and Applications, 46 (2025), pp.~1964--1987.

\bibitem{Elias_DoubleEig}
{\sc E.~Jarlebring, S.~Kvaal, and W.~Michiels}, {\em Computing all pairs
  {$(\lambda,\mu)$} such that {$\lambda$} is a double eigenvalue of {$A+\mu
  B$}}, SIAM J. Matrix Anal. Appl., 32 (2011), pp.~902--927.

\bibitem{meer00c}
{\sc K.~Meerbergen}, {\em The rational {L}anczos method for {H}ermitian
  eigenvalue problems}, Num. Lin. Alg. Appl., 8 (2001), pp.~33--52.

\bibitem{Meerbergen1997}
{\sc K.~Meerbergen and A.~Spence}, {\em Implicitly restarted {A}rnoldi with
  purification for the shift-invert transformation}, Mathematics of
  Computation, 66 (1997), pp.~667--689.

\bibitem{MIRANIAN20031}
{\sc L.~Miranian and M.~Gu}, {\em Strong rank revealing {LU} factorizations},
  Lin. Alg. Appl., 367 (2003), pp.~1--16.

\bibitem{muhivc2014}
{\sc A.~Muhi{\v{c}} and B.~Plestenjak}, {\em A method for computing all values
  $\lambda$ such that {$A+ \lambda B$} has a multiple eigenvalue}, Lin. Alg.
  Appl., 440 (2014), pp.~345--359.

\bibitem{pan2000}
{\sc C.~T. Pan}, {\em On the existence and computation of rank-revealing {LU}
  factorizations}, Lin. Alg. Appl., 316 (2000), pp.~199--222.

\bibitem{pierce97}
{\sc D.~J. Pierce and J.~G. Lewis}, {\em Sparse multifrontal rank revealing
  {QR} factorization}, SIAM Journal on Matrix Analysis and Applications, 18
  (1997), pp.~159--180.

\bibitem{POOLE2000353}
{\sc G.~Poole and L.~Neal}, {\em The rook's pivoting strategy}, Journal of
  Computational and Applied Mathematics, 123 (2000), pp.~353--369.
\newblock Numerical Analysis 2000. Vol. III: Linear Algebra.

\bibitem{ruhe83}
{\sc A.~Ruhe}, {\em The two-sided {A}rnoldi algorithm for nonsymmetric
  eigenvalue problems}, in Matrix Pencils, B.~K{\aa}gstr{\"o}m and A.~Ruhe,
  eds., Berlin, Heidelberg, 1983, Springer Berlin Heidelberg, pp.~104--120.

\bibitem{saad92}
{\sc Y.~Saad}, {\em Numerical Methods for Large Eigenvalue Problems},
  {Algorithms and Architectures for Advanced Scientific Computing}, Manchester
  University Press, Manchester, UK, 1992.

\bibitem{Shadan2015}
{\sc F.~Shadan, F.~Khoshnoudian, and A.~Esfandiari}, {\em A frequency
  response-based structural damage identification using model updating method:
  Model updating using incomplete {FRF}s}, Structural Control and Health
  Monitoring, 23 (2015), pp.~286--302.

\bibitem{Shapiro2009}
{\sc B.~Shapiro and M.~Shapiro}, {\em On eigenvalues of rectangular matrices},
  Proceedings of the Steklov Institute of Mathematics, 267 (2009),
  pp.~248--255.

\bibitem{VANDOOREN1979103}
{\sc P.~{Van Dooren}}, {\em The computation of {K}ronecker's canonical form of
  a singular pencil}, Lin. Alg. Appl., 27 (1979), pp.~103--140.

\bibitem{wang2023}
{\sc Z.~Wang, B.~Dong, Y.~Yu, X.~Zhao, and Y.~Fang}, {\em The structured
  multiparameter eigenvalue problems in finite element model updating
  problems}, Structural Engineering and Mechanics, 88 (2023), pp.~493--500.

\end{thebibliography}

\end{document}